\setlist*{label=\alph*), ref=\alph*, wide, labelwidth=!}
\newcommand{\argmax}{\mathop{\mathrm{argmax}}\nolimits}
\newcommand{\Var}{\mathop{\mathrm{Var}}\nolimits}
\newcommand{\supp}{\mathop{\mathrm{supp}}\nolimits}
\newcommand{\seq}{\mathop{\mathrm{seq}}\nolimits}
\DeclareFontFamily{U}{tipa}{}
\DeclareFontShape{U}{tipa}{m}{n}{<->tipa10}{}
\newcommand{\arc@char}{{\usefont{U}{tipa}{m}{n}\symbol{62}}}%
\newcommand{\arc}[1]{\mathpalette\arc@arc{#1}}
\newcommand{\arc@arc}[2]{%
  \sbox0{$\m@th#1#2$}%
  \vbox{
    \hbox{\resizebox{\wd0}{\height}{\arc@char}}
    \nointerlineskip
    \box0
  }%
}
\DeclareMathOperator{\conv}{conv}
\DeclareMathOperator{\cl}{cl}
\DeclareMathOperator{\intr}{int}
\newtheorem{theorem}{Theorem}[section]
\newtheorem{theorema}{Theorem}
\newtheorem{prop}[theorem]{Proposition}     
\newtheorem{lemma}[theorem]{Lemma}
\newtheorem{cor}[theorem]{Corollary}
\theoremstyle{remark}
\newtheorem{remark}[theorem]{Remark}
\def\R{\mathbb{R}}
\def\Z{\mathbb{Z}}
\def\N{\mathbb{N}}
\def\E{\mathbb{E}}
\def\P{\mathbb{P}}
\def\S{\mathbb{S}}
\def\I{\mathbbm{1}}
\begin{document}

\title[Contraction principle for trajectories of random walks]{Contraction principle for trajectories of random \\ walks and Cram\'er's theorem for kernel-weighted sums}

\author{Vladislav~Vysotsky}
\address{Vladislav~Vysotsky, University of Sussex and St.\ Petersburg Department of Steklov Mathematical Institute}
\email{v.vysotskiy@sussex.ac.uk}
\thanks{This work was supported in part by the RFBR Grant 19-01-00356.}

\begin{abstract}
In 2013 A.A.\ Borovkov and A.A.\ Mogulskii proved a weaker-than-standard ``metric'' large deviations principle (LDP) for trajectories of random walks in $\R^d$ whose increments have the Laplace transform finite in a neighbourhood of zero. We prove that general metric LDPs are preserved under uniformly continuous mappings. This allows us to transform the result of Borovkov and Mogulskii into standard LDPs. We also give an explicit integral representation of the rate function they found. As an application, we extend the classical Cram\'er theorem by proving an LPD for kernel-weighted sums of i.i.d.\ random vectors in $\R^d$. 
\end{abstract}

\subjclass[2010]{Primary: 60F10, 60G50; secondary: 49J45, 52A22, 54A10, 60B11, 60D05, 60G70}
\keywords{Random walk, large deviations, contraction principle, non-standard large deviations principle, metric large deviations principle, extended large deviations principle, kernel-weighted sum, weighted sum, Cram\'er's theorem,  Skorokhod topology $M_1'$, relaxation, weak-* topology, directional decomposition of total variation}

\maketitle

\section{Introduction}
The study of large deviations of trajectories of random walks was initiated by A.A.\ Borovkov in the 1960's. In 1976 A.A.\ Mogulskii~\cite{Mogulskii} proved a large deviations result for the trajectories of a multidimensional random walk under the assumption that the Laplace transform of its increments is finite. In~\cite{Mogulskii} Mogulskii also studied the large deviations under the weaker {\it  Cram\'er moment assumption}, i.e.\ when the Laplace transform is finite in a neighbourhood of zero, but these results appear to be of a significantly limited use. 

The further progress was due to the concept of {\it metric} large deviations principles  (LDPs, in short) on general metric spaces introduced by Borovkov and Mogulskii in~\cite{BorovkovMogulskii2010} in 2010. The upper bound in such metric LDPs is worse than the conventional one  -- the infimum of the rate function is taken over shrinking $\varepsilon$-neighbourhoods of a set rather than over its closure as in standard LDPs; compare definitions~\eqref{eq: LD general} and~\eqref{eq: LD extended} below. These bounds may differ when the rate function  is  {\it not tight}, i.e.\ its sub-level sets are non-compact.

For the scaled trajectories of random walks under the Cram\'er moment assumption, Borovkov and Mogulskii~\cite{BorovkovMogulskii2, BorovkovMogulskii3} obtained a metric LDP  in the space $D[0,1]$ of c\`adl\`ag functions equipped with a Skorokhod-type metric generating a 
version of the topology $M_1$; see Theorem~\ref{thma: BM}. Their rate function has totally bounded sub-level sets but in general, it is not tight because the metric space is not complete. There is no discussion in~\cite{BorovkovMogulskii2,BorovkovMogulskii3} on whether the usual LDP for the trajectories fails.

 Our paper was motivated by the question whether the metric LDP of~\cite{BorovkovMogulskii2,BorovkovMogulskii3} can be converted, in any reasonable sense, to a standard LDP.
Our progress in this direction is as follows.

Our first result is a {\it contraction principle} for metric LDPs on general metric spaces (Theorem~\ref{thm: contraction general}). In particular, it shows that a uniformly continuous mapping to a complete metric space  transfers a metric LDP into a standard one if the sub-level sets of the rate function are totally bounded (Corollary~\ref{cor: contraction complete}). This allows us to transform the metric LDP of~\cite{BorovkovMogulskii2, BorovkovMogulskii3} into standard LDPs with tight rate functions (Theorem~\ref{thm: contraction}). For example, this yielded a standard LDP (Proposition~\ref{prop: LDP weak}) for scaled trajectories of random walks in $\R^d$ considered as random elements of the space $BV[0,1]$ of functions of bounded variation equipped with the metric $\rho_*$ of joint convergence of functions in $L^1$ and of their values at~$1$. It metrizes weak-* convergence on sets of functions of uniformly bounded variation. It is in a certain sense shorter than the Skorokhod metric generating  the topology $M_1$ (Lemma~\ref{lem: metrics}). 

We  used ideas from calculus of variations, which offers well-developed methods for working with integral (action) functionals on the space $BV[0,1]$ equipped with the weak-* topology. In particular, this allowed us to find an explicit {\it integral representation} (Theorem~\ref{thm: I_D}) for the rate function of Borovkov and Mogulskii~\cite{BorovkovMogulskii2, BorovkovMogulskii3}, who gave such representation only in dimension one. 
We also found a wide class of sets where the upper bound in the metric LDP of~\cite{BorovkovMogulskii2, BorovkovMogulskii3} coincides with the standard one (see Propositions~\ref{prop: upper general} and~\ref{prop: upper} and Remark~\ref{rem: closures}).  

We have the following applications of our results on the trajectories of random walks. 

First, from our contraction principle for the trajectories  (i.e.\ Theorem~\ref{thm: contraction}) we obtained the LDPs for the {\it perimeter} and the {\it area} of the {\it convex hull} of a planar random walk, presented in our different paper~\cite[Proposition~4.1]{AkopyanVysotsky}. This application motivated our initial interest in the questions considered here.

Second, from our LDP for the trajectories in $(BV[0,1], \rho_*)$ (i.e.\ Proposition~\ref{prop: LDP weak}), we obtained the LDP for {\it kernel-weighted sums} of i.i.d.\ random vectors in $\R^d$ (Theorem~\ref{thm: weighted}). This extends the classical Cram\'er theorem, without any additional assumptions. We give an explicit formula for the rate function, which is especially accessible in dimension one. To the best of our knowledge, the strongest available results in this direction are by Gantert et al.~\cite{Gantert+} and Kiesel and Stadtm{\"u}ller~\cite{KieselStadtmuller2000}, both works concerning dimension one only. The paper~\cite{KieselStadtmuller2000} considers the ``light-tailed'' case where the i.i.d.\ terms have finite Laplace transform. The paper~\cite{Gantert+} considers  the ``heavy-tailed'' case, where the common distribution of the terms has ``stretched''  exponential tails and thus does not satisfy the Cram\'er moment assumption. Our Theorem~\ref{thm: weighted} complements the one-dimensional results of~\cite{Gantert+},~\cite{KieselStadtmuller2000} and shows that there is a natural transition of the rate function from the  ``light-tailed'' to the ``heavy-tailed'' cases; see also Remark~\ref{rem: weighted}.\ref{item: minimizer}. 

\medskip

When the current  paper was ready for submission, we became aware of the work of Najim~\cite{Najim}. His main result, Theorem 3.1 on the LDP for weighted sums of i.i.d.\ random vectors, is stronger than the LDP in our Theorem~\ref{thm: weighted}. Our new contributions are the simplified explicit expressions for the rate function (formulas~\eqref{eq: weight rate f explicit} and~\eqref{eq: weight rate f 1-D}) and for the  corresponding minimizing trajectories we found in certain cases  (Remark~\ref{rem: weighted}.\ref{item: minimizer}). As a corollary to his main result,  J.\ Najim obtained an LDP in the weak-* topology on $BV[0,1]$ (\cite[Theorems~4.1 and~4.3]{Najim}) for trajectories of random walks under the Cram\'er moment assumption. This result is very similar (but not equivalent, cf.~Section~\ref{sec: weak-* and rho_*}) to our Proposition~\ref{prop: LDP weak}.  

Notably, we obtained our extension of Cram\'er's theorem as a corollary and only at the last step of our work, as opposed to the argument of~\cite{Najim} going in the reverse direction.
Therefore, despite of  a significant intersection with~\cite{Najim}, we believe that the corresponding part of our paper is of independent interest since it has a different motivation and uses a different approach -- we used Proposition~\ref{prop: LDP weak}, which itself follows from the metric LDP in~\cite{BorovkovMogulskii2, BorovkovMogulskii3} using the metrization result of Theorem~\ref{thma: Hognas}, the contraction principle in Theorem~\ref{thm: contraction general}, and geometric Lemma~\ref{lem: metrics}, which is of its own interest. 

\medskip

This paper is organized as follows. Section~\ref{sec: prelim} presents the metric LDP for trajectories of random walks by Borovkov and Mogulskii~\cite{BorovkovMogulskii2, BorovkovMogulskii3} and also introduces metric LDPs in general metric spaces. In Section~\ref{Sec: contraction} we prove a contraction principle for uniformly continuous mappings of metric LDPs, and present its application for random walk trajectories. In Section~\ref{sec: weak-*} we define the weak-* and related topologies on the space of functions of bounded variation, and compare them with and the Skorokhod topologies $M_1$ and $M_2$ and their modifications. 
The main result of the section is geometric Lemma~\ref{lem: metrics}, which compares the metric $\rho_*$ with Skorokhod metrics. The setup and results of this section are used in Section~\ref{sec: integral}, where we give an integral representation for the rate function, prove an LDP for the trajectories in the space $(BV[0,1], \rho_*)$, and present a few types of sets where the upper bound in the metric LDP for the trajectories equals the standard one. The last section concerns our main application, Cram\'er's theorem for kernel-weighted sums of i.i.d.\ random vectors in~$\R^d$.

\section{Metric LDPs for trajectories of random walks} \label{sec: prelim}
In this section we give necessary definitions and provide a brief  summary of the results of Borovkov and Mogulskii~\cite{BorovkovMogulskii2, BorovkovMogulskii3}.

\subsection{General metric LDPs} \label{sec: metric LDP}
We start with general definitions. Let $\mathcal X$ be a Hausdorff topological space equipped with the Borel $\sigma$-algebra. Let $\mathcal{I}:\mathcal X\to[0,+\infty]$ be a lower semi-continuous function such that $\mathcal I \not \equiv +\infty$. By definition, this means that the {\it sub-level sets} $\{x\in \mathcal X \,:\, \mathcal{I}(x) \leq \alpha \}_{\alpha \in [0,  \infty)}$ of $\mathcal I$ are closed. If $\mathcal X$ is a metric space, this is equivalent to requiring that for every $x \in \mathcal X$, we have $\mathcal I(x) \le \liminf_{n \to \infty} \mathcal I(x_n)$ for any sequence $(x_n)_{n \ge 1}$ converging to $x$. 

We say that a sequence $(Z_n)_{n \ge 1}$  of random elements of $\mathcal X$ satisfies the {\it large deviations principle (LDP)} in $\mathcal X$ with a {\it rate function} $\mathcal{I}$ and a {\it speed} $(a_n)_{n \ge 1} \subset (0, \infty)$ if for every Borel set $B\subset \mathcal X$,
\begin{equation}\label{eq: LD general}
-\inf_{x\in \intr B}\mathcal I(x) \leq\liminf_{n\to\infty} \frac{1}{a_n}\log \P(Z_n\in B)  \leq\limsup_{n\to\infty}\frac{1}{a_n}\log \P(Z_n\in B) \leq-\inf_{x\in \cl B}\mathcal I(x),
\end{equation}
where $\inf_\varnothing := +\infty$ by the usual convention. 
If $\mathcal X$ is equipped with a metric $\ell$, we say that  $(Z_n)_{n \ge 1}$ satisfies the {\it metric LDP}  in $(\mathcal X, \ell)$  with a  rate function  $\mathcal{I}$ and a  speed $(a_n)_{n \ge 1}$~if 
\begin{equation}\label{eq: LD extended}
-\inf_{x\in \intr B}\mathcal I(x) \leq\liminf_{n\to\infty}\frac{1}{a_n}\log \P(Z_n\in B)  \leq\limsup_{n\to\infty}\frac{1}{a_n}\log \P(Z_n\in B) \leq- \lim_{\varepsilon \to 0+} \inf_{x \in B^\varepsilon_\ell} \mathcal I(x),
\end{equation}
where $B^\varepsilon_\ell$ denotes the open $\varepsilon$-neighbourhood of $B$ in $\ell$ (with the convention $\varnothing^\varepsilon_\ell:= \varnothing$). For brevity, we will not refer to the speed when $a_n=n$, which essentially is the only case of our interest in this paper. We stress that the concept of metric LDPs is not a  topological one, in the sense that~\eqref{eq: LD extended} may cease to hold when $\ell$ is replaced by another metric generating the same topology. 

We say that the rate function $\mathcal I$ is  {\it tight} if its  sub-level sets  are compact.  A natural way to relax this condition in the case when $\mathcal X$ is a metric space is to assume total boundedness of the sub-level sets of $\mathcal I$. Recall that a subset of a metric space is {\it totally bounded} if it has a finite $\varepsilon$-net for every $\varepsilon>0$; such subset is relatively compact, i.e.\ its closure is compact, if the metric space is {\it complete}.

Note that if $\mathcal I$ is tight, then \eqref{eq: LD general} and \eqref{eq: LD extended} coincide because 
\begin{equation} \label{eq: tight blow up}
\inf_{x \in \cl B} \mathcal I(x) = \lim_{\varepsilon \to 0+} \inf_{x \in B^\varepsilon_\ell} \mathcal I(x);
\end{equation}
see~\cite[Lemma~1.1]{BorovkovMogulskii1} or Dembo and Zeitouni~\cite[Lemma~4.1.6(b)]{DemboZeitouni}.
It is also easy to see that this equality holds true for any $\mathcal I$  when $B$ is relatively compact.

Equality \eqref{eq: tight blow up} may not hold in general (see Remark~\ref{rem}.\ref{item: tight blow up fails}), and thus
a metric LDP is in general strictly weaker than the corresponding standard one.  For an example when a metric LDP holds true but the standard one does not, see Bazhba et al.~\cite[Theorem~2.1 and Section~3.3]{Bazhba+}. On the other hand, a metric LDP is stronger than the corresponding {\it weak} LDP, defined as the standard one but with the upper bound required only for relatively compact sets. For this reason, we find it misleading to refer to metric LDPs as {\it extended} ones, which is the original terminology of Borovkov and Mogulskii~\cite{BorovkovMogulskii2010}. Moreover, neither standard nor weak LDPs require a metric structure.

\subsection{Skorokhod topologies} \label{sec: Skorokhod top}
We will write $x = (x^{(1)}, \ldots, x^{(d)})$ for the Cartesian coordinates of $x \in \R^d$ with $d \in \N$, $|x|$ for the Euclidean norm, and `$\cdot$' for the scalar product on $\R^d$.

Denote by $D[0,1]=D([0,1];\R^d)$ the set of {\it c\`adl\`ag} functions on $[0,1]$, that is right-continuous $\R^d$-valued functions  without discontinuities of the second kind\footnote{A function on $[0,1]$ has no discontinuities of the second kind if it has right and left limits at every point.}. We will consider  several Skorokhod-type metrics and topologies on $D[0,1]$. We refer to the book by Whitt~\cite{Whitt}, where Chapter~12 gives a comprehensive treatise of the matter. 

The {\it completed graph} $\Gamma h$ of a function $h \in D[0,1]$ is a subset of $[0,1] \times \R^d$ defined by 
\[
\Gamma h:= \big \{(t, x): 0 \le t \le 1, x \in  [h(t-), h(t+)] \big \},
\]
where $h(0-):=h(0)$, $h(1+):=h(1)$, and $[u_1, u_2]$ denotes the line segment with the endpoints $u_1, u_2 \in \R^d$. 
We deliberately wrote $h(t+)$ instead of the equal quantity $h(t)$, to stress that $\Gamma h$ is defined even when $h$ has no discontinuities of the second kind but is not right-continuous. 
We equip the completed graphs with the topology induced from $[0,1] \times \R^d$.

Consider a set of parametrizations of the completed graph:
$$
\Pi(h):=\big\{\gamma
\big| \big. \gamma:[0,1] \to \Gamma h \text{ is bijective, continuous, and satisfying } \gamma(0)=(0, h(0)) \big\}
$$
(this set is non-empty by \cite[Remark~12.3.3]{Whitt}). We can thus regard the completed graphs as images of continuous curves in $\R^d$. The metric $\rho_1$ on $D[0,1]$ is  the least uniform distance between parametrizations of completed graphs:
\begin{equation} \label{eq: rho parametric}
\rho_1(h_1, h_2):=\inf_{\gamma_1 \in \Pi(h_1), \gamma_2 \in \Pi(h_2)} \sup_{t \in [0,1]} |\gamma_1(t) - \gamma_2(t)|.
\end{equation}
The topology generated by $\rho_1$ is called the {\it Skorokhod topology} $M_1$; see~\cite[Remark~12.3.4]{Whitt}. The metric $\rho_2$ on $D[0,1]$ is the Hausdorff distance $d_H$ between the completed graphs, i.e.\ $\rho_2(h_1, h_2):= d_H(\Gamma h_1, \Gamma h_2)$. In other words,
\begin{equation} \label{eq: rho Hausd}
\rho_2(h_1, h_2) =  \max_{i =1,2 }\max_{(s,x) \in \Gamma h_i} \min_{(t,y) \in \Gamma h_{3-i}} |(s, x) - (t, y)|.
\end{equation}
This is a genuine distance because each $\Gamma h$ is compact, being a continuous image of $[0,1]$.
The topology generated by $\rho_2$ is called the {\it Skorokhod topology} $M_2$; see~\cite[Theorem~12.10.1]{Whitt}.

Furthermore, consider the modified completed graphs $\Gamma' h:=\Gamma h \cup [0, h(0)]$ and define $\rho_1'$ and $\rho_2'$ exactly as above using $\Gamma' $ (parametrized by functions in $\Pi'$) instead of~$\Gamma$. Then $\rho_1'$, $\rho_2'$ are metrics too.  
Equivalently, for $h_1, h_2 \in D[0,1]$ we can write 
\begin{equation} \label{eq: rho and rho'}
\rho_i'(h_1, h_2)= \rho_i(h_1 \I_{(0,1]}, h_2 \I_{(0,1]})
\end{equation}
if we extend definitions \eqref{eq: rho parametric} and~\eqref{eq: rho Hausd} to the space of functions on $[0,1]$ that are c\`adl\`ag  on $(0,1]$ and have right limit at $0$, where the $\rho_i$'s still remain metrics because completed graphs uniquely define such functions.

From the definitions above, we readily obtain the following inequalities on $D[0,1]$:
\begin{equation} \label{eq: Hausd shorter}
\rho_2' \le \rho_1' \le \rho_1 \quad \text{and} \quad \rho_2 \le \rho_1.
\end{equation}
The inequality $\rho_1' \le \rho_1$ follows because $\rho_1(h_1, h_2)$ equals the infimum in the definition of $\rho_1'$ taken over the pairs of parametrizations $\gamma_1 \in \Pi'(h_1)$, $\gamma_2 \in \Pi'(h_2)$ satisfying $\gamma_1([0,t])=[0, h_1(0)]$ and $\gamma_2([0,t])=[0, h_2(0)]$ for some $t \in [0,1]$. Moreover, we have
\begin{align} \label{eq: Hausd' shorter}
\rho_2'(h_1, h_2) &= d_H\big(\Gamma  h_1 \cup [0, h_1(0)], \Gamma  h_2 \cup [0, h_2(0)] \big) \notag \\
&\le \max \Big(d_H(\Gamma  h_1, \Gamma  h_2), d_H \big([0, h_1(0)], [0, h_2(0)] \big) \Big) \notag \\
&\le \max\big(\rho_2(h_1, h_2), |h_1(0)-h_2(0)| \big), 
\end{align}
where we used the fact that $d_H(A_1 \cup B_1, A_2 \cup B_2) \le \max(d_H(A_1, A_2), d_H(B_1, B_2))$ for any non-empty $A_1, A_2, B_1, B_2 \subset \R^{d+1}$ (which follows easily from the definition of $d_H$).

We will work with the topologies $M_1'$ and $M_2'$ on $D[0,1]$ generated respectively by $\rho_1'$ and~$\rho_2'$. The topology $M_1'$ was recently used by Bazhba et al.~\cite{Bazhba+}; the versions of $M_1'$ and $M_2'$ on $D[0, \infty)$ briefly appeared in~\cite[Section~13.6.2]{Whitt}. 
Inequalities \eqref{eq: Hausd shorter} and \eqref{eq: Hausd' shorter} imply that
\begin{equation} \label{eq: M inclusions}
M_2' \subset M_1' \subset M_1 \quad \text{and} \quad M_2' \subset M_2 \subset M_1,
\end{equation}
where the third inclusion follows from \eqref{eq: Hausd' shorter} using that every $M_2'$-closed set is also closed in $M_2$ because convergence of functions in $M_2$ implies convergence of their values at~$0$. 

The following result, presented without a proof because we will not use it in this paper, describes convergence in $M_i'$ in terms of more standard convergence in $M_i$. It shows that naturally, the difference between the convergences is only in the behaviour around time $0$. In particular, the value  at $0$ is not an $M_i'$-continuous functional.

\begin{prop} \label{prop: M_i and M_i'}
Let $h, h_1, h_2, \ldots \in D[0,1]$ and $i \in \{1,2\}$. Then $\lim_{n \to\infty }\rho_i'(h_n, h)=0$ if and only if  there exits a sequence $t_1, t_2, \ldots \in [0,1]$ such that $\lim_{n \to \infty} t_n=0$,
\[
\lim_{n \to\infty }\rho_i(h_n(\cdot \vee t_n), h)=0,
\] 
and
\[
\begin{cases}
\lim_{n \to \infty}\sup_{0 \le t \le t_n}   \sup_{0 \le s \le t} \big (|h_n(s)| \cdot |h(0)| - h_n(t) \cdot h(0) \big ) =0, & \text{ if } i=1,\\
\lim_{n \to \infty}\sup_{0 \le t \le t_n} \min_{0 \le s \le 1} |s h(0)-h_n(t)| = 0, & \text{ if } i=2.
\end{cases}
\]
\end{prop}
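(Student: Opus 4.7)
My plan exploits the decomposition $\Gamma' h = \Gamma h \cup (\{0\} \times [0, h(0)])$, together with the identification of $\Gamma' h$ as the completed graph of $h\I_{(0,1]}$. Consequently any $\gamma \in \Pi'(h)$ starts at $(0, 0)$, traverses $\{0\} \times [0, h(0)]$ monotonically on some interval $[0, u^*]$ (the segment can only be crossed once by a continuous bijection, and its sole attachment point to $\Gamma h$ is $(0, h(0))$), and then parametrizes $\Gamma h$ on $[u^*, 1]$; the analogous splitting holds for $\gamma_n \in \Pi'(h_n)$ at times $[0, u_n^*]$. The guiding intuition is that $M_i'$-convergence reduces to $M_i$-convergence on $[t_n, 1]$ for some $t_n \to 0$, with the trajectory of $h_n$ on $[0, t_n]$ together with $[0, h_n(0)]$ required to approximate $[0, h(0)]$ in an appropriate sense.

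For necessity I would pick parametrizations $\gamma_n, \gamma$ (for $i = 1$) realizing $\sup_u |\gamma_n(u) - \gamma(u)| \to 0$ and set $t_n := \pi_1(\gamma_n(u^*))$, where $\pi_1$ denotes the time coordinate projection; since $\gamma(u^*) = (0, h(0))$, $t_n \to 0$ automatically. For $i = 2$ I would instead take $t_n := \sup\{t \in [0,1] : h_n([0, t]) \subset [0, h(0)]^{\sqrt{\rho_2'(h_n, h)}}\}$, with the superscript denoting a Euclidean neighbourhood. The boundary condition in the proposition then encodes the statement that $\gamma_n([0, u^*])$ --- which traces $\{0\} \times [0, h_n(0)]$ followed by the initial stretch of $\Gamma h_n$ up to time $t_n$ --- must stay close to $\{0\} \times [0, h(0)]$: for $i = 2$ this is immediate, whereas for $i = 1$ the required \emph{monotone} parametrization forces the projection of $h_n(\cdot)$ onto $h(0)/|h(0)|$ to be essentially nondecreasing and to approximate $|h_n(\cdot)|$ on $[0, t_n]$, which is exactly the stated inequality. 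The convergence $\rho_i(h_n(\cdot \vee t_n), h) \to 0$ then follows by restricting the chosen parametrizations to $[u^*, 1]$ or the completed graphs to $[t_n, 1]$, noting that the prepended constant piece $[0, t_n] \times \{h_n(t_n)\}$ of $\Gamma(h_n(\cdot \vee t_n))$ contributes distance $o(1)$ since $t_n \to 0$.

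For sufficiency I would reverse the construction. Given $t_n \to 0$ satisfying both conditions, I would build explicit parametrizations (for $i = 1$) or estimate the Hausdorff distance directly (for $i = 2$). Outside the initial interval, $M_i$-convergence of $h_n(\cdot \vee t_n) \to h$ supplies nearly-matching parametrizations or graphs. Inside it, the arc $[0, h_n(0)] \cup h_n([0, t_n])$ approximates $[0, h(0)]$ in Hausdorff sense thanks to the second condition, together with the fact that $h_n(0)$ itself lies near $[0, h(0)]$ --- a consequence of $M_i$-convergence on $[t_n, 1]$ and $t_n \to 0$. For $i = 1$, the projected monotonicity additionally allows me to build a monotone parametrization of this initial arc that is compatible with the linear parametrization of $[0, h(0)]$ by $\gamma$.

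The principal difficulty will be the $i = 1$ case, where the parametric nature of $\rho_1'$ forces the geometric closeness of the initial arc to $[0, h(0)]$ to be realised by a \emph{monotone} correspondence. The condition $|h_n(s)| \cdot |h(0)| - h_n(t) \cdot h(0) \to 0$ uniformly in $s \le t \le t_n$ captures this obstruction precisely: the case $s = t$ forces each $h_n(t)$ to lie essentially on the ray through $h(0)$, while the case $s < t$ forces the projection $h_n(\cdot) \cdot h(0)$ to be approximately nondecreasing on $[0, t_n]$. Verifying that these two features together are both necessary and sufficient for the requisite monotone parametrization --- and handling the subtle interaction with jumps of $h_n$ near $0$ and the possible position of $h_n(0)$ on or off $[0, h(0)]$ --- is the most delicate technical step.
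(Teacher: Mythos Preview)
The paper explicitly states that Proposition~\ref{prop: M_i and M_i'} is ``presented without a proof because we will not use it in this paper,'' so there is no proof in the paper to compare your proposal against.

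Evaluated on its own merits, your outline is the natural geometric approach and is broadly sound: the decomposition $\Gamma' h = (\{0\}\times[0,h(0)])\cup\Gamma h$ with unique attachment point $(0,h(0))$ is correct, and the splitting of any $\gamma\in\Pi'(h)$ into a monotone traversal of the initial segment followed by a parametrization of $\Gamma h$ is valid (by bijectivity and the topology of $\Gamma' h$). Your reading of the $i=1$ boundary condition is also right: the diagonal $s=t$ forces near-alignment with $h(0)$ via Cauchy--Schwarz, and the off-diagonal forces approximate monotonicity of the projection.

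A few places where the sketch would need tightening into a proof: (i) the infimum defining $\rho_1'$ need not be attained, so you must work with $\varepsilon_n$-optimal parametrizations and carry the error through; (ii) your choice $t_n:=\pi_1(\gamma_n(u^*))$ may land $\gamma_n(u^*)$ in the interior of a jump segment of $h_n$, so that $\gamma_n(u^*)\neq(t_n,h_n(t_n))$ --- this requires care when you splice in the constant piece $[0,t_n]\times\{h_n(t_n)\}$ of $\Gamma(h_n(\cdot\vee t_n))$; (iii) the relative order of $u^*$ and $u_n^*$ is not controlled, so the ``initial arc'' $\gamma_n([0,u^*])$ may be entirely inside $\{0\}\times[0,h_n(0)]$ (if $u^*<u_n^*$) or may already have entered $\Gamma h_n$, and both cases must be handled; (iv) the degenerate case $h(0)=0$ should be treated separately (there the boundary condition reduces to $h_n\to 0$ uniformly on $[0,t_n]$). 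None of these is a fatal gap --- they are exactly the ``delicate technical steps'' you flag --- but they are where the real work lies.
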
 
The last condition means that whenever $h(0) \neq 0$, for $i=1$ the values of $h_n$ on $[0, t_n]$  are nearly proportional to $h(0)$ and their orthogonal projections on $h(0)$ are nearly non-decreasing, and for $i=2$ they nearly belong to $[0, h(0)]$. 

Lastly, we note that each of the metrics $\rho_i$, $ \rho_i'$ is separable and incomplete. Separability follows from Lemma 1 in Section~14 in Billingsley~\cite{Billingsley}. Observing that $\{ \I_{[1/2, 1/2+1/n)} \}_{n \ge 2}$ is a Cauchy sequence in each of the metrics which does not converge shows their incompleteness.

\subsection{Metric LDPs for trajectories of random walks} \label{sec: BorMog}
Let $(S_n)_{n \ge 1}$, where $S_n = X_1 + \ldots + X_n$, be a random walk with independent identically distributed increments $X_1, X_2, \ldots$ in $\R^d$, where $d \ge 1$. For any $n \in \N$, let $S_n(\cdot)$ be the piece-wise linear function on $[0,1]$ defined by linear interpolation between its values at the points $k/n$, where $0 \le k \le n$, $k \in \Z$, that are given by $S_n(k/n):=S_k$, where $S_0:=0$. These are time-rescaled trajectories of the random walk $(S_n)_{n \ge 1}$. We will regard them as random elements of the space $D[0,1]$ (or its subsets) equipped with the Borel $\sigma$-algebras generated by $\rho_1'$ or $\rho_2'$.

Let $\mathcal L(u):= \E e^{u \cdot X_1}$, where $u \in \R^d$, be the Laplace transform of the random vector $X_1$ in $\R^d$. Denote by $\mathcal D_{\mathcal L}:=\{u \in \R^d: \mathcal L(u)< \infty\}$ the {\it effective domain} of $\mathcal L$. We say that $X_1$ satisfies the {\it Cram\'er moment assumption} if $\mathcal L$ is finite in an open neighbourhood of $0$, that is $0 \in \intr \mathcal D_{\mathcal L}$ in short. The function $K:=\log \mathcal L$, called the {\it cumulant moment generating function} of $X_1$, is always convex. Denote by $I$ the {\it Legendre--Fenchel transform} of $K$, i.e.
\begin{equation} \label{eq: Legendre-Fenchel}
I(v):=\sup_{u \in \R^d} \big( u \cdot v - K(u) \big), \quad v \in \R^d.
\end{equation}
This is a convex lower semi-continuous function with values in $[0, \infty]$. The classical Cram\'er theorem states that under the assumption $0 \in \intr \mathcal D_{\mathcal L}$, the sequence $(S_n/n)_{n \ge 1}$ satisfies the LDP in the Euclidean space $\R^d$ with the tight rate function~$I$. This justifies saying that $I$ is the {\it rate function} of~$X_1$.

The {\it total variation} $\Var(h)$ of a function $h \in D[0,1]$ is defined by
\begin{equation} \label{eq: Var def}
\Var(h):= \sup_{\mathbf t \subset (0,1]: \, \# \mathbf t < \infty } \int_0^1 |(h^{\mathbf t})'(s)| ds, 
\end{equation}
where $h^{\mathbf t}$ denotes the continuous function on $[0,1]$ defined by linear interpolation between its values $\mathbf t \cup \{0,1\}$ that are given by $h^{\mathbf t}(s) := h(s)$ for $s \in \mathbf t \cup \{1\}$ and $h^{\mathbf t}(0):=0$. We can regard $\Var(h)$ as the length of the curve in  $\R^d$ obtained by taking the spatial coordinate of a curve in $\Pi'(h)$. Denote by $BV[0,1]:=\{h \in D[0,1]: \Var(h)< \infty\}$ the set of c\`adl\`ag $\R^d$-valued functions of {\it bounded variation}, and by $AC_0[0,1]$ its subset of coordinate-wise {\it absolutely continuous} functions such that $h(0)=0$. 

Lastly, define a non-negative functional $I_D$ on $h \in D[0,1]$:
\begin{equation} \label{eq: I_D def}
I_D(h):= \sup_{\mathbf t \subset (0,1]: \, \# \mathbf t < \infty } \int_0^1 I((h^{\mathbf t})'(s)) ds. 
\end{equation}
It is worth noting that $I_D(h)=\int_0^1 I(h'(s))ds$ when $h \in AC_0[0,1]$ (\cite[Theorem~5.3]{BorovkovMogulskii2}).

We now present the metric LDP for trajectories of random walks.

\begin{theorema}[Borovkov and Mogulskii~\cite{BorovkovMogulskii2, BorovkovMogulskii3}] \label{thma: BM}
Assume that $X_1$ is a random vector in $\R^d$ such that $0 \in \intr \mathcal D_{\mathcal L}$. Then the  random sequences  $(S_n(\cdot)/n)_{n \ge 1}$ and  $(S_{[n \, \cdot]}/n)_{n \ge 1}$ satisfy the metric LDPs~\eqref{eq: LD extended} in each of the four metric spaces $(D[0,1], \rho_i')$ and $(BV[0,1], \rho_i')$ for $i\in \{1,2\}$, with the rate function $I_D$ whose sub-level sets are totally bounded (in each space). 

Moreover, $I_D$ is convex  and it satisfies, for some constants $c_1, c_2 >0$,
\begin{equation} \label{eq: I_D bound}
I_D(h) \ge c_1 \Var (h) - c_2, \qquad h \in D[0,1].
\end{equation}
\end{theorema}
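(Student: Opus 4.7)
The plan is to establish the metric LDP first in the single space $(D[0,1], \rho_1')$ and then transfer it to the other three spaces by elementary topological comparison, using the inequalities \eqref{eq: Hausd shorter} and the inclusions \eqref{eq: M inclusions}. By the classical Cram\'er theorem applied on each of $m$ dyadic sub-intervals, the finite-dimensional marginals $(S_{nk/m}/n)_{k=1}^m$ satisfy a standard LDP in $\R^{md}$ with rate $\frac{1}{m} \sum_k I\bigl(m(v_k - v_{k-1})\bigr)$. A projective-limit argument then naturally suggests $I_D$ as the candidate rate function through its variational definition \eqref{eq: I_D def}, which is a supremum over polygonal approximations $h^{\mathbf t}$. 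For the lower bound, I would approximate any $h$ with $I_D(h) < \infty$ in the $\rho_1'$ metric by a piecewise linear $h^{\mathbf t}$ and reduce to the finite-dimensional Cram\'er bound. For the upper bound, which in the metric LDP is the weaker form \eqref{eq: LD extended}, I would combine the same finite-dimensional Cram\'er estimates on polyhedral cylinders with an $\varepsilon$-blow-up; the lack of compactness of sub-level sets (caused by large jumps admitted under the Cram\'er-only condition) is precisely what forces the $\varepsilon$-neighborhood formulation rather than the standard one.

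Transferring to the remaining three spaces is then mostly bookkeeping. From $\rho_2' \le \rho_1'$ and $M_2' \subset M_1'$, both the $\varepsilon$-neighborhoods widen and the topological interiors shrink, each in the direction that preserves the respective bound in \eqref{eq: LD extended} with the same rate function $I_D$. The restriction to $BV[0,1]$ uses that $S_n(\cdot)/n$ and $S_{[n\cdot]}/n$ lie deterministically in $BV[0,1]$, so the probabilities are unchanged, while the bound \eqref{eq: I_D bound} makes $I_D$ infinite outside $BV[0,1]$, so that infima of $I_D$ over neighborhoods taken in the subspace agree with those taken in $D[0,1]$. The two trajectory sequences are furthermore exponentially equivalent in every $\rho_i'$ because the Hausdorff distance between their completed graphs on $[k/n, (k+1)/n]$ is at most $|X_{k+1}|/n$, which is exponentially negligible under the Cram\'er moment condition.

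For the structural properties of $I_D$: convexity is inherited from convexity of $I$ (as a Legendre--Fenchel transform) and linearity of the operation $h \mapsto h^{\mathbf t}$, after which the supremum over $\mathbf t$ in \eqref{eq: I_D def} preserves convexity. The linear lower bound \eqref{eq: I_D bound} exploits that the Cram\'er condition gives some $r > 0$ with $M_0 := \sup_{|u| \le r/2} K(u) < \infty$, whence $I(v) \ge (r/2)|v| - M_0$ for every $v \in \R^d$; integrating along any polygonal approximation and taking the supremum over partitions yields \eqref{eq: I_D bound} with $c_1 = r/2$, $c_2 = M_0$. Total boundedness of $\{I_D \le \alpha\}$ then follows from the inclusion $\{I_D \le \alpha\} \subset \{h \in BV[0,1] : \Var(h) \le (\alpha + c_2)/c_1\}$ combined with a Helly/Blaschke-type compactness argument for completed graphs of uniformly bounded length in $[0,1] \times \R^d$, first in $\rho_2'$ (Hausdorff distance) and then transferred to $\rho_1'$ via bi-Lipschitz reparametrizations of these curves. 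The hardest step will be the metric upper bound in $\rho_1'$ under the Cram\'er condition alone: a standard truncation-and-exponential-approximation argument, which suffices in the uniform topology for Mogulskii's 1976 result, does not combine cleanly with the $M_1'$ geometry, since a single untruncated increment produces a macroscopic segment in the completed graph; the argument must therefore directly match such jump contributions with the variational definition \eqref{eq: I_D def} of $I_D$.
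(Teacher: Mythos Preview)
Your proposal takes a fundamentally different route from the paper. Theorem~\ref{thma: BM} is \emph{attributed} to Borovkov and Mogulskii; the paper does not re-prove the metric LDP from Cram\'er's theorem and projective limits, but rather explains how to extract the stated version from the cited works. Concretely, Borovkov and Mogulskii prove their metric LDPs on the pseudometric spaces $(\mathbb D,\rho_2)$ and $(\mathbb V_0,\rho_1)$ of functions without discontinuities of the second kind. The paper then (i) restricts to $\mathbb D_0=\{h\in\mathbb D:h(0)=0\}$ via Lemma~\ref{lem: inclusions}, (ii) transports the LDP through the isometry $h\mapsto h^+$ from $(\mathbb D_0,\rho_i)$ onto $(D[0,1],\rho_i')$ using the contraction principle of Theorem~\ref{thm: contraction general}, and (iii) passes between $BV[0,1]$ and $D[0,1]$ and between $\rho_1'$ and $\rho_2'$ by the inclusion Lemma~\ref{lem: inclusions} and the inequality $\rho_2'\le\rho_1'$. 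Convexity, total boundedness, and lower semi-continuity of $I_D$ are likewise cited from \cite{BorovkovMogulskii2,BorovkovMogulskii3}.

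Your direct-proof approach is conceptually reasonable, but as written it has a genuine gap: you explicitly flag the metric upper bound in $\rho_1'$ as ``the hardest step'' and do not supply an argument beyond naming the difficulty. Since this is precisely the substance of Borovkov and Mogulskii's contribution under the Cram\'er-only assumption, your proposal is a plan rather than a proof. One smaller point: the two trajectory processes are not merely exponentially equivalent but satisfy the deterministic bound $\rho_i(S_n(\cdot)/n,\,S_{[n\cdot]}/n)\le 2/n$ (match each point of one completed graph to a point of the other with the same spatial coordinate; the time coordinate differs by at most $1/n$), so no appeal to moment conditions on $|X_{k+1}|$ is needed there. Finally, when transferring from $\rho_1'$ to $\rho_2'$ you should check separately that $I_D$ is lower semi-continuous in the \emph{coarser} topology $M_2'$, which does not follow from lower semi-continuity in $M_1'$; the paper handles this by noting that the $\rho_2'$ result is established directly in \cite{BorovkovMogulskii2}.
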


\begin{remark} \label{rem} 
Let us make a number of comments. 
\begin{enumerate}
\item \label{item: I_D vanish} Note that $I_D(h)=+\infty$ for $h \not \in BV[0,1]$ by \eqref{eq: I_D bound}. 

\item  The metric LDPs for $S_n(\cdot)$ and $S_{[n \, \cdot]}$ are equivalent by  $\rho_i(S_n(\cdot)/n, S_{[n \, \cdot]}/n) \le 2/n$.  The metric LDPs in $\rho_1'$ are stronger than the ones in $\rho_2'$ since $\rho_2' \le \rho_1'$. We stated these weaker results to match the presentation of~\cite{BorovkovMogulskii2, BorovkovMogulskii3}, which puts emphasis on the metric $\rho_2'$. The only advantage of $\rho_2'$ is in its relative simplicity. 

\item Although the rate function $I_D$ is not tight, equality \eqref{eq: tight blow up} still holds true with $\mathcal I =I_D$ and $\ell=\rho_1'$ for certain types of non-relatively compact sets $B$, described in Remark~\ref{rem: closures}. 

\item \label{item: tight blow up fails} In general, \eqref{eq: tight blow up} does not hold for $\ell=\rho_2'$. For example, assume that $d=1$, $\E X_1 =0$, $\mathcal D_{\mathcal L}$ is bounded, and consider $B := \{h_n\}_{n \ge 4}$ with $h_n:=\I_{[1/2-2/n,1/2-1/n) \cup [1/2+1/n, 1/2+2/n)}$. No subsequence $\{h_{n_k}\}$ converges in $\rho_2'$ to an element of $D[0,1]$, hence $B$ is closed w.r.t.\ $\rho_2'$. On the other hand, we have $\rho_2'(g_n,B)=1/n$ for $g_n:=\I_{[1/2-1/n,1/2+1/n)}$. Then $I_D(g_n)=\frac12 I_D(h_n) = const >0$  by equality \eqref{eq: I_D 1-D} below, therefore \eqref{eq: tight blow up} cannot hold.

It is plausible that \eqref{eq: tight blow up} does not hold for $\ell=\rho_1'$ too but we have no examples. The papers~\cite{BorovkovMogulskii2010, BorovkovMogulskii1, BorovkovMogulskii2, BorovkovMogulskii3} offer no discussion on this question.

\item The reason why the metric LDPs do not immediately imply the corresponding LDPs is incompleteness of the metric spaces considered.  There exists an explicit complete metric that generates the topology $M_1$ (\cite[Section 12.8]{Whitt}), and  it appears that its minor modification  should give a complete metric $\tilde \rho_1'$ generating $M_1'$. However, such complete metrics are longer than the initial ones, therefore it seems impossible to have the upper bound in~\eqref{eq: LD extended} with $\ell = \tilde \rho_1'$ instead of $\ell = \rho_1'$.

\item  Bound~\eqref{eq: I_D bound} readily follows from the inequality $I(v) \ge c_1 |v| - c_2$ for $v \in \R^d$, which holds true because $I$ is convex and grows at least linearly at infinity; see \eqref{eq: support function} and \eqref{eq: recession function} below.

\item In view of \eqref{eq: rho and rho'} and given that $S_n(0)=S_0=0$, one may argue that it would be more natural to employ the non-standard space $D'[0,1]:=\{h \I_{(0,1] }: h \in D[0,1]\}$ equipped with the usual Skorokhod metrics $\rho_i$. This is essentially done in \cite{BorovkovMogulskii2, BorovkovMogulskii3}. The spaces $(D'[0,1], \rho_i)$ and $(D[0,1], \rho_i')$ are isometric, so the metric LDPs transfer easily. On the other hand, it is natural to work with the space $BV[0,1]$ of c\`adl\`ag modifications of functions of bounded variation because these are distribution functions of vector-valued finite measures. This explains our choice of the standard space $D[0,1]$.

\item The ultimate reason why the space $BV[0,1]$ arises is that the rate function $I$ is not super-linear at infinity. The only exception is when the Laplace transform of the increments is finite (see~\eqref{eq: support function} and~\eqref{eq: recession function}), in which case it suffices to work with the space $(AC_0[0,1], \| \cdot \|_\infty)$ and the rate function $I_D$ is tight due to the super-linearity of $I$. Such effects of the behaviour of integrand at infinity are well-known in the calculus of variations, which studies minimization of integral functionals.
\end{enumerate}
\end{remark}

Theorem~\ref{thma: BM} is a combination and adaptation of several results scattered through~\cite{BorovkovMogulskii1, BorovkovMogulskii2, BorovkovMogulskii3}, therefore we shall explain in detail how  we obtained it. The authors of~\cite{BorovkovMogulskii2, BorovkovMogulskii3} considered a wider space $\mathbb D$ of functions $h:[0,1] \to \R^d$ without discontinuities of the second kind satisfying $h(t) \in [h(t-), h(t+)]$ for $t \in [0,1]$. The functions $\rho_i$, defined on $\mathbb D$ as above in~\eqref{eq: rho parametric} and~\eqref{eq: rho Hausd}, are now pseudometrics. Put $\mathbb D_0:=\{h \in \mathbb D: h(0)=0\}$ and define the functionals $\Var$ and $I_D$ on $\mathbb D_0$ as above in~\eqref{eq: Var def} and~\eqref{eq: I_D def}. The functional $I_{\mathbb D}$ on $\mathbb D$ is defined by $I_{\mathbb D}:=I_D$ on $\mathbb D_0$ and $I_{\mathbb D}:=+\infty$ on $\mathbb D_0^c$, see~\cite[Definition~2.1]{BorovkovMogulskii1}. Lastly, putting $h^+(t):=h(t+)$ for $t \in [0,1]$ defines an isometry from $(\mathbb D_0, \rho_i)$ onto $(D[0,1], \rho_i')$.

Then $(S_{[n \, \cdot]}/n)_{n \ge 1}$ satisfies the metric LDP in $(\mathbb D, \rho_2)$ with convex rate function $I_{\mathbb D}$ by \cite[Theorem~5.5]{BorovkovMogulskii2}, where measurability refers to the Borel $\sigma$-algebra (see~\cite[Definition~1.4]{BorovkovMogulskii1}),
and $I_{\mathbb D}$ is convex and lower semi-continuous  by~\cite[Theorem~5.2.(i),(ii)]{BorovkovMogulskii2} and its sub-level sets are totally bounded by~\cite[Lemma~5.3]{BorovkovMogulskii2}  combined with inequality \eqref{eq: I_D bound}. By Lemma~\ref{lem: inclusions}.\ref{item: restrict}, the metric LDP remains valid on $(\mathbb D_0, \rho_2)$ because $\P(S_{[n \, \cdot]} \in \mathbb D_0)=1$ for every $n$ and $I_{\mathbb D}=+\infty$ on $\mathbb D_0^c$. Finally, by our Theorem~\ref{thm: contraction general}, the isometry $h \mapsto h^+$ transforms this metric LDP into the one on $(D[0,1], \rho_2')$ with the rate function $I_D$ because $S_{[n \, \cdot]}^+ = S_{[n \, \cdot]}$ and $I_{\mathbb D}(h)=I_D(h^+)$ for every $h \in \mathbb D_0$ by \cite[Theorem 5.1]{BorovkovMogulskii2}.

Denote by $\mathbb V_0$ the subset of $\mathbb D_0$ of functions of finite variation. By~\cite[Theorem 6.2]{BorovkovMogulskii3}, $(S_{[n \, \cdot]}/n)_{n \ge 1}$ satisfies the metric LDP in $(\mathbb V_0 , \rho_1)$ with the rate function $I_{\mathbb D}$, which is lower semi-continuous (in $\rho_1$) because it is so in the shorter pseudometric $\rho_2$, and its sub-level sets are totally bounded  by~\cite[Lemma~6.2]{BorovkovMogulskii3}  combined with~\eqref{eq: I_D bound}. This metric LDP remains valid in $(BV[0,1], \rho_1')$ by the same argument as above using Remark~\ref{rem}.\ref{item: I_D vanish}. It in turn implies the weaker metric LDP in $(BV[0,1], \rho_2')$ and also implies the one in $(D[0,1], \rho_1')$ by Lemma~\ref{lem: inclusions}.\ref{item: extend} using that $I_D$ is lower semi-continuous in $\rho_1'$ because it is so in $\rho_2'$.

\section{Contraction principle for metric LDPs} \label{Sec: contraction}

The following general result is analogous to the usual contraction principle for standard LDPs; cf.~\cite[Theorem~4.2.1 and Remark~(c)]{DemboZeitouni}. To state it, we first give two definitions.

For a function $J : \mathcal X \to [0, +\infty]$ defined on a topological space $\mathcal X$, denote by $\cl J$ its {\it closure} (or the {\it lower semi-continuous regularization}), i.e.\ the function whose epigraph is the closure (in the product topology on $\mathcal X \times [0, +\infty]$) of the epigraph  of $J$. Recall that $\mathcal D_J=\{x \in \mathcal X: J(x)<\infty\}$ denotes the effective domain of $J$. We say that a mapping $F$ between metric spaces is uniformly continuous {\it on a subset} $A$ of the domain if $F|_A$ is uniformly continuous. 

\begin{theorem} \label{thm: contraction general}
Let $(\mathcal X, \ell_1)$ and $(\mathcal Y, \ell_2)$ be metric spaces and $(Z_n)_{n \ge 1}$ be a sequence of random elements  that satisfies a metric LDP in $(\mathcal X, \ell_1)$ with some speed and a rate function $\mathcal I$. Let $F:\mathcal X \to \mathcal Y$ be a measurable mapping that is continuous at every $x \in \mathcal D_{\mathcal I}$ and  uniformly continuous on every sub-level set of $\mathcal I$. Then the sequence $(F(Z_n))_{n \ge 1}$ satisfies the metric LDP in $(\mathcal Y, \ell_2)$ with the same speed and the rate function $\cl \tilde{\mathcal J}$, where $\tilde{\mathcal J}(y):= \inf_{x \in F^{-1}(y)} \mathcal I(x)$ for $y \in \mathcal Y$. 

Moreover, if  the sub-level sets of $\mathcal I$ are totally bounded, then the same is true for $\cl \tilde{\mathcal J}$.  
\end{theorem}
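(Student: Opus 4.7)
The plan is to verify the two inequalities of the metric LDP for $(F(Z_n))$ separately and then handle the preservation of total boundedness. Throughout I will use the tautological identity $\inf_{F^{-1}(S)}\mathcal I=\inf_S \tilde{\mathcal J}$, valid for any $S\subset\mathcal Y$ by the definition of $\tilde{\mathcal J}$, and the standard fact that $\cl\tilde{\mathcal J}$ and $\tilde{\mathcal J}$ have the same infimum on every open set.

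For the lower bound, fix a Borel $B\subset\mathcal Y$ and observe that $F^{-1}(\intr B)\cap\mathcal D_{\mathcal I}\subset\intr F^{-1}(B)$: if $x_0\in\mathcal D_{\mathcal I}$ and $F(x_0)\in\intr B$, then continuity of $F$ at $x_0$ supplies an open $V\ni x_0$ with $F(V)\subset\intr B$, so $V\subset F^{-1}(B)$. Consequently $\inf_{\intr F^{-1}(B)}\mathcal I\le\inf_{F^{-1}(\intr B)}\mathcal I=\inf_{\intr B}\tilde{\mathcal J}=\inf_{\intr B}\cl\tilde{\mathcal J}$, and the assumed metric LDP for $(Z_n)$ applied to $F^{-1}(B)$ yields the required $\liminf$ inequality.

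For the upper bound, let $\alpha:=\lim_{\delta\to 0+}\inf_{(F^{-1}(B))^\delta_{\ell_1}}\mathcal I$; the metric LDP for $(Z_n)$ gives $\limsup a_n^{-1}\log\P(F(Z_n)\in B)\le-\alpha$, so it suffices to show $\inf_{B^\varepsilon_{\ell_2}}\cl\tilde{\mathcal J}\le\alpha$ for every $\varepsilon>0$. Fix $\alpha''>\alpha$ and use the uniform continuity of $F$ on the sub-level set $L_{\alpha''}:=\{\mathcal I\le\alpha''\}$ to produce a modulus $\delta_0=\delta_0(\alpha'',\varepsilon)>0$. For any $\delta\in(0,\delta_0)$, the inequality $\inf_{(F^{-1}(B))^\delta}\mathcal I\le\alpha<\alpha''$ yields $x_\delta\in(F^{-1}(B))^\delta\cap L_{\alpha''}$ together with some $x'_\delta\in F^{-1}(B)$ satisfying $\ell_1(x_\delta,x'_\delta)<\delta$. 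The uniform continuity on $L_{\alpha''}$ gives $\ell_2(F(x_\delta),F(x'_\delta))<\varepsilon$ whenever $x'_\delta\in L_{\alpha''}$; otherwise one combines it with the pointwise continuity of $F$ at $x_\delta\in\mathcal D_{\mathcal I}$, shrinking $\delta$ below the pointwise continuity modulus at $x_\delta$. In either case $F(x_\delta)\in B^\varepsilon_{\ell_2}$ and $\cl\tilde{\mathcal J}(F(x_\delta))\le\tilde{\mathcal J}(F(x_\delta))\le\mathcal I(x_\delta)<\alpha''$; letting $\alpha''\downarrow\alpha$ concludes the bound.

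For the total-boundedness preservation, I would show $\{y:\cl\tilde{\mathcal J}(y)\le\alpha\}\subset\cl_{\ell_2}F(L_{\alpha+\eta})$ for every $\eta>0$: if $\cl\tilde{\mathcal J}(y)\le\alpha$, the characterization $\cl\tilde{\mathcal J}(y)=\liminf_{y'\to y}\tilde{\mathcal J}(y')$ produces $y_n\to y$ with $\tilde{\mathcal J}(y_n)<\alpha+\eta$, and an almost-infimizer of $\tilde{\mathcal J}(y_n)$ gives $x_n\in F^{-1}(y_n)\cap L_{\alpha+\eta}$, so $y_n\in F(L_{\alpha+\eta})$. Since $L_{\alpha+\eta}$ is totally bounded by hypothesis and $F$ is uniformly continuous on it, $F(L_{\alpha+\eta})$ is totally bounded, and so is its closure. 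The main obstacle will be the upper-bound step, specifically the case $x'_\delta\notin L_{\alpha''}$: the uniform modulus on the sub-level set alone fails to control $\ell_2(F(x_\delta),F(x'_\delta))$, and one must delicately interleave this modulus with the pointwise-continuity modulus of $F$ at $x_\delta$, making compatible simultaneous choices of $\delta$ and $x_\delta$.
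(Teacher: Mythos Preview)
Your lower-bound argument and the total-boundedness argument are correct and essentially identical to the paper's. The problem is precisely where you flag it: the case $x'_\delta\notin L_{\alpha''}$ in the upper bound.

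Your proposed patch --- ``shrinking $\delta$ below the pointwise continuity modulus at $x_\delta$'' --- does not close the gap. The modulus $\eta(x_\delta,\varepsilon)$ depends on $x_\delta$, but $x_\delta$ is produced only \emph{after} $\delta$ is chosen, so you cannot choose $\delta<\eta(x_\delta,\varepsilon)$ without circularity. You correctly name this as ``the main obstacle'', but you do not indicate any mechanism for making the ``compatible simultaneous choices'' you allude to; in fact, pointwise continuity on $\mathcal D_{\mathcal I}$ alone gives no uniform control over the family $\{\eta(x_\delta,\varepsilon)\}_\delta$ as $\delta\to0$, since the points $x_\delta$ may range over all of $L_{\alpha''}$.

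The paper sidesteps this circularity by a different device: rather than taking $x_\delta$ in $(F^{-1}(A))^{\delta}$ and hoping the witness $x'_\delta\in F^{-1}(A)$ lands in the sub-level set, the paper replaces $F^{-1}(A)$ by $F^{-1}(A)\cap L_R$ \emph{before} blowing up. Then any witness $x'$ automatically satisfies $\mathcal I(x')\le R$, and the dichotomy is clean: either $\mathcal I(x)\le R$, in which case both points lie in $L_R$ and uniform continuity applies, or $\mathcal I(x)>R\ge\mathcal I(x')\ge\inf_{F^{-1}(A^\varepsilon)}\mathcal I$ (since $x'\in F^{-1}(A)\subset F^{-1}(A^\varepsilon)$). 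Either way the infimum inequality $\inf_{(F^{-1}(A)\cap L_R)^{\delta_0}}\mathcal I\ge\inf_{F^{-1}(A^\varepsilon)}\mathcal I$ follows, with no appeal to pointwise continuity. The pointwise continuity hypothesis is used \emph{only} for the lower bound. You should restructure your upper-bound argument along these lines.
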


The interest in this result is in its corollary, which allows one to bring the metric LDPs~\eqref{eq: LD extended} into the standard form~\eqref{eq: LD general}. 

\begin{cor} \label{cor: contraction complete}
If the metric space $(\mathcal Y, \ell_2)$ is complete and the sub-level sets of $\mathcal I$ are totally bounded, then the sequence $(F(Z_n))_{n \ge 1}$ satisfies the (standard) LDP with the tight rate function $\cl \tilde{\mathcal J}$.
\end{cor}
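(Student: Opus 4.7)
The plan is to derive the corollary as a direct consequence of Theorem~\ref{thm: contraction general} combined with the general fact, already recalled in~\eqref{eq: tight blow up}, that a tight rate function collapses the distinction between metric and standard LDPs.

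First, I would apply Theorem~\ref{thm: contraction general} to the mapping $F$. This immediately yields a metric LDP in $(\mathcal Y, \ell_2)$ for $(F(Z_n))_{n\ge 1}$ with rate function $\cl\tilde{\mathcal J}$, and, by the last assertion of the theorem, the sub-level sets of $\cl\tilde{\mathcal J}$ are totally bounded whenever the sub-level sets of $\mathcal I$ are.

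Second, I would upgrade ``totally bounded'' to ``compact'' using the hypothesis that $(\mathcal Y,\ell_2)$ is complete. By definition, $\cl\tilde{\mathcal J}$ is lower semi-continuous, so every sub-level set $\{y\in\mathcal Y:\cl\tilde{\mathcal J}(y)\le\alpha\}$ is closed in $\mathcal Y$. Being simultaneously closed and totally bounded in a complete metric space, it is compact. Hence $\cl\tilde{\mathcal J}$ is a tight rate function.

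Finally, I would invoke equality~\eqref{eq: tight blow up} (valid for any tight rate function, as cited from \cite[Lemma 1.1]{BorovkovMogulskii1} or \cite[Lemma 4.1.6(b)]{DemboZeitouni}) to conclude that the upper bound
\[
\limsup_{n\to\infty}\frac{1}{a_n}\log\P(F(Z_n)\in B)\le -\lim_{\varepsilon\to 0+}\inf_{y\in B^\varepsilon_{\ell_2}}\cl\tilde{\mathcal J}(y)
\]
coincides with $-\inf_{y\in\cl B}\cl\tilde{\mathcal J}(y)$ for every Borel $B\subset\mathcal Y$. Since the lower bound in~\eqref{eq: LD extended} is already the same as in~\eqref{eq: LD general}, this gives the standard LDP for $(F(Z_n))_{n\ge 1}$ with the tight rate function $\cl\tilde{\mathcal J}$.

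There is no genuine obstacle here: the corollary is essentially a two-line consequence of Theorem~\ref{thm: contraction general}, and the only ``content'' beyond that theorem is the standard metric-topology fact that totally bounded plus closed equals compact in a complete space. The real work is therefore concentrated in the proof of Theorem~\ref{thm: contraction general} itself, not in this deduction.
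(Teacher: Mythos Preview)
Your proposal is correct and follows exactly the same approach as the paper, which dispatches the corollary in one sentence by invoking Theorem~\ref{thm: contraction general}, equality~\eqref{eq: tight blow up}, and the fact that closed totally bounded subsets of complete metric spaces are compact. Your write-up is simply a more detailed unpacking of that same argument.
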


This follows from Theorem~\ref{thm: contraction general} by equality \eqref{eq: tight blow up} and the fact that closed totally bounded subsets of complete metric spaces are compact. 

Our main application of Corollary~\ref{cor: contraction complete} is in the context of random walks trajectories:

\begin{theorem} \label{thm: contraction}
Assume that $X_1$ is a random vector in $\R^d$  such that $0 \in \intr \mathcal D_{\mathcal L}$.  Let $\mathcal Y$ be a complete metric space
and $F: BV[0,1] \to \mathcal Y$ be a mapping that is continuous in $\rho_1'$ and uniformly continuous in $\rho_1'$ on $\{h: \Var(h) \le R\}$ for every $R>0$. Then both sequences of random elements $(F(S_n(\cdot)/n))_{n \ge 1}$ and $(F(S_{[n \cdot ]}/n))_{n \ge 1}$ satisfy the (standard) LDP in $\mathcal Y$ with the tight rate function   $\cl \tilde{\mathcal J}$, where $\tilde{\mathcal J}(y):= \inf_{h \in F^{-1}(y)} I_D(h)$ for $y \in \mathcal Y$.
\end{theorem}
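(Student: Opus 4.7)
The plan is to deduce Theorem~\ref{thm: contraction} as a direct application of Corollary~\ref{cor: contraction complete} to the metric LDP of Theorem~\ref{thma: BM}. Concretely, I would take $(\mathcal X, \ell_1) := (BV[0,1], \rho_1')$, $(\mathcal Y, \ell_2)$ as given, $\mathcal I := I_D$, and $Z_n := S_n(\cdot)/n$ (and separately $Z_n := S_{[n\,\cdot]}/n$). Theorem~\ref{thma: BM} supplies the metric LDP in $(BV[0,1], \rho_1')$ with rate function $I_D$ whose sub-level sets are totally bounded, and $(\mathcal Y, \ell_2)$ is complete by hypothesis, so all the ``outer'' assumptions of Corollary~\ref{cor: contraction complete} are already in place. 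What remains is to check that $F$ satisfies the hypotheses of Theorem~\ref{thm: contraction general}, namely measurability, continuity at every point of $\mathcal D_{I_D}$, and uniform continuity on every sub-level set of $I_D$.

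For the first two, note that $\mathcal D_{I_D} \subset BV[0,1]$ by Remark~\ref{rem}.\ref{item: I_D vanish}, so the hypothesis that $F$ is $\rho_1'$-continuous on $BV[0,1]$ gives continuity at every $h \in \mathcal D_{I_D}$ and, combined with the separability of $\rho_1'$, Borel measurability of $F$ on $BV[0,1]$; after that one extends $F$ to $D[0,1]$ in any Borel way (for instance by setting it equal to a fixed element of $\mathcal Y$ outside $BV[0,1]$) without affecting $F(Z_n)$, since $Z_n \in BV[0,1]$ almost surely. The key observation for the third hypothesis is bound~\eqref{eq: I_D bound}: it yields the inclusion
\[
\{h \in D[0,1] : I_D(h) \le \alpha\} \subset \bigl\{h \in BV[0,1] : \Var(h) \le (\alpha + c_2)/c_1 \bigr\}
\]
for every $\alpha \ge 0$. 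Hence the assumption that $F$ is uniformly $\rho_1'$-continuous on every set $\{h : \Var(h) \le R\}$ immediately implies uniform continuity of $F$ on every sub-level set of $I_D$.

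With these verifications in hand, Corollary~\ref{cor: contraction complete} applied to each of $(S_n(\cdot)/n)$ and $(S_{[n\,\cdot]}/n)$ yields a standard LDP in $\mathcal Y$ with the tight rate function $\cl \tilde{\mathcal J}$ described in Theorem~\ref{thm: contraction general}, which is precisely the assertion. I do not anticipate a genuine obstacle here: the only subtle point is the passage from ``uniform continuity on $\{\Var \le R\}$'' to ``uniform continuity on sub-level sets of $I_D$,'' but this is handed to us by~\eqref{eq: I_D bound} in one line. Everything else is bookkeeping: quoting Theorem~\ref{thma: BM} for the starting LDP, invoking Theorem~\ref{thm: contraction general} for the transfer, and using completeness of $\mathcal Y$ together with total boundedness of the sub-level sets to close the gap between~\eqref{eq: LD extended} and~\eqref{eq: LD general} via~\eqref{eq: tight blow up}.
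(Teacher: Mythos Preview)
Your proposal is correct and follows precisely the approach of the paper: apply Corollary~\ref{cor: contraction complete} to the metric LDP of Theorem~\ref{thma: BM} in $(BV[0,1],\rho_1')$, using bound~\eqref{eq: I_D bound} to pass from uniform continuity on $\{\Var(h)\le R\}$ to uniform continuity on sub-level sets of $I_D$. The detour through extending $F$ to $D[0,1]$ is unnecessary, since Theorem~\ref{thma: BM} already supplies the metric LDP on $BV[0,1]$ itself; you can take $\mathcal X = BV[0,1]$ throughout and skip that step.
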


This follows from Corollary~\ref{cor: contraction complete} combined with Theorem~\ref{thma: BM} using  the lower bound \eqref{eq: I_D bound} for $I_D(h)$ in terms of $\Var(h)$.

\begin{proof}[{\bf Proof of Theorem~\ref{thm: contraction general}.}]
For any $A \subset \mathcal Y$, we have
\[
\{ x: \mathcal I(x) < \infty \}   \cap  F^{-1}(\intr A)  =  \{ x: \mathcal I(x) < \infty \} \cap  \intr (F^{-1}(\intr A))
\]
because for every $x$ in the set in the l.h.s.,  $\{x\}_{\ell_1}^\delta  \subset F^{-1}(\intr A) $ holds for some $\delta > 0$ by continuity of $F$ at $x$, and thus $x \in \intr( F^{-1}(\intr A))$. Hence
\begin{equation} \label{eq: change 2}
\inf_{x \in \intr( F^{-1}(\intr A)) } \mathcal I(x) = \inf_{x \in  F^{-1}(\intr A)} \mathcal I(x)  = \inf_{y \in  \intr A} \tilde{\mathcal J}(y),
\end{equation}
where the second equality holds true by the definition of $\tilde{\mathcal J}$. Furthermore, we claim that 
\begin{equation} \label{eq: change 1}
\lim_{\delta \to 0+} \inf_{x \in (F^{-1}(A))^\delta_{\ell_1}} \mathcal I (x) \ge \lim_{\varepsilon \to 0+} \inf_{x \in F^{-1}(A^\varepsilon_{\ell_2})} \mathcal I(x) = \lim_{\varepsilon \to 0+} \inf_{y \in A^\varepsilon_{\ell_2} }  \tilde{\mathcal J}(y).
\end{equation}

The inequality is trivial when its l.h.s.\ is infinite, otherwise denote the l.h.s.\ by $R$.
By uniform continuity of $F$ on sub-level sets of $\mathcal I$, for any $\varepsilon >0$ there exists a $\delta_0 >0$ such that $\ell_2(F(x),F(x'))<\varepsilon$ whenever $\ell_1(x, x')< \delta_0$ and $\max(\mathcal I(x), \mathcal I(x')) \le R$. Hence 
\[
\big( F^{-1}( A) \cap \{ x: \mathcal I(x) \le R\}\big)^{\delta_0}_{\ell_1} \subset F^{-1}(A^\varepsilon_{\ell_2}),
\]
and for any $\delta\in(0, \delta_0)$ we have
\[
R \ge \inf_{x \in (F^{-1}(A))^{\delta_0}_{\ell_1}} \mathcal I (x) = \inf_{x \in (F^{-1}(A))^{\delta_0}_{\ell_1} \cap \{ \mathcal I \le R\}^{\delta_0}_{\ell_1}} \mathcal I (x) = \inf_{x \in (F^{-1}(A)  \cap \{ \mathcal I \le R\})^{\delta_0}_{\ell_1}} \mathcal I (x) \ge \inf_{x \in F^{-1}(A^\varepsilon_{\ell_2})} \mathcal I (x),
\]
which implies \eqref{eq: change 1} by first taking $\delta \to 0+$ and then $\varepsilon \to 0+$.

Denoting by  $(a_n)_{n \ge 1}$ the speed in the metric LDP for $(Z_n)_{n \ge 1}$, for any Borel set $A \subset \mathcal Y$,
\begin{align*}
\limsup_{n\to\infty}\frac{1}{a_n}\log \P(F(Z_n)\in A)  &\le - \lim_{\delta \to 0+} \inf_{x \in(F^{-1}(A))^\delta_{\ell_1}} \mathcal I(x) \le - \lim_{\varepsilon \to 0+} \inf_{y \in A^\varepsilon_{\ell_2} }  \tilde{\mathcal J}(y),
\end{align*}
where the second inequality follows from \eqref{eq: change 1}. We also have
\begin{align*}
\liminf_{n\to\infty}\frac{1}{a_n}\log \P(F(Z_n)\in A)  &\ge - \inf_{x \in \intr (F^{-1}(\intr A))} \mathcal I(x) = - \inf_{y \in \intr A}  \tilde{\mathcal J}(y),
\end{align*}
where the equality follows from \eqref{eq: change 2}. 

Finally, we can replace $\tilde{\mathcal J}$ above by the function $\mathcal J$ given by $\mathcal J:= \cl \tilde{\mathcal J}$, which is lower semi-continuous by definition, non-negative, and not identically $ +\infty$. Indeed, we have
\begin{equation} \label{eq: regularization =}
\inf_{y \in \intr A} \mathcal J(y) = \inf_{y \in \intr A} \tilde{\mathcal J}(y), \qquad A\subset \mathcal Y,
\end{equation}
which follows easily from the representation (e.g., see~\cite[Lemma~2.8]{FirasTimo})
\begin{equation} \label{eq: lsc regularization}
\mathcal J(y) = \sup \Big \{ \inf_{z \in U} \tilde{\mathcal J}(z):  y \in U, U \subset \mathcal Y, U  \text{ is open}\Big \}, \qquad y \in \mathcal Y.
\end{equation}
Thus, the sequence $(F(Z_n))_{n \ge 1}$ of random elements of $\mathcal Y$ satisfies the metric LDP in $(\mathcal Y, \ell_2)$ with the rate function $\mathcal J$ and speed $(a_n)_{n \ge 1}$, as stated.

Note that representation \eqref{eq: lsc regularization} also implies that  $\cl \tilde{\mathcal J} \le \tilde{\mathcal J}$ and 
\begin{equation} \label{eq: sub-level sets}
\{y: \mathcal J(y) < \alpha\} \subset \cl \{y: \tilde{\mathcal J}(y) < \alpha  \}, \qquad \alpha  >0. 
\end{equation} 
Indeed, if there is a $y \in \mathcal Y$ such that $\mathcal J(y)< \alpha $ but $y \not \in \cl \{y: \tilde{\mathcal J}(y) < \alpha  \}$, then since $\mathcal Y$ is a metric space, there is an open ball $U$ centred at $y$ that does not intersect with $\cl \{y: \tilde{\mathcal J}(y) < \alpha  \}$. Thus, $\tilde{\mathcal J} \ge \alpha$ on $U$, hence $\mathcal J(y) \ge \alpha$ by  \eqref{eq: lsc regularization}, which is a contradiction.

If $T$ is a totally bounded subset of $ \mathcal X $ and $F$ is uniformly continuous on $T$, then $F(T)$ is totally bounded in $\mathcal Y$. Therefore, if the sub-level sets of $\mathcal I$ are totally bounded in $\mathcal X$, by
$$
\{y: \tilde{ \mathcal J}(y)< \alpha \} = \Big \{y: \inf_{x \in F^{-1}(y)} \mathcal I(x) < \alpha \Big \} \subset F(\{x: \mathcal I (x)<\alpha\}),
$$
the set on the l.h.s.\ is totally bounded in $\mathcal Y$, and so is its closure. Then the sub-level sets of $\mathcal J$ are totally bounded by \eqref{eq: sub-level sets}, as claimed.

\end{proof}

The proof presented actually reveals a wide class of sets where equality~\eqref{eq: tight blow up} holds true and thus the metric LDP bound \eqref{eq: LD extended} can be strengthened to the standard one~\eqref{eq: LD general}. Let us state this as a separate assertion.

\begin{prop} \label{prop: upper general}
Let $(\mathcal X, \ell)$, $(\mathcal Y, \ell_2)$ be metric spaces, and $F: \mathcal X \to \mathcal Y$, $\mathcal I: \mathcal X \to [0, \infty]$ be mappings such that $F$ is uniformly continuous on the sub-level sets of $\mathcal I$. Assume that the function $\tilde{\mathcal J}(y):= \inf_{x \in F^{-1}(y)} \mathcal I(x)$ on $\mathcal Y$ is tight. Then equality~\eqref{eq: tight blow up} holds true for every non-empty set $B $ such that $B=F^{-1}(A)$ for some $A \subset \mathcal Y$ satisfying $\cl (F^{-1}(A))=F^{-1}(\cl A)$, with the infimum on the l.h.s.\ of~\eqref{eq: tight blow up} attained at some~$x \in \cl B$.
\end{prop}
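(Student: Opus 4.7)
My plan is to split the proposition into the equality claim and the attainment claim. The ``$\ge$'' direction in~\eqref{eq: tight blow up}, namely $\inf_{x \in \cl B} \mathcal I(x) \ge \lim_{\varepsilon \to 0+} \inf_{x \in B^\varepsilon_\ell} \mathcal I(x)$, is immediate from $\cl B \subseteq B^\varepsilon_\ell$ for every $\varepsilon > 0$, so all the work lies in the reverse inequality.

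For the ``$\le$'' direction, I would recycle the central calculation from the proof of Theorem~\ref{thm: contraction general}. Assume the right-hand side of~\eqref{eq: tight blow up} equals some finite $R$ (otherwise there is nothing to prove). Uniform continuity of $F$ on $\{\mathcal I \le R\}$ yields, for each $\varepsilon > 0$, a $\delta_0 > 0$ such that
\[
\bigl(F^{-1}(A) \cap \{\mathcal I \le R\}\bigr)^{\delta_0}_\ell \subseteq F^{-1}(A^\varepsilon_{\ell_2}).
\]
Taking infima and letting first $\delta_0 \to 0$ and then $\varepsilon \to 0$, exactly as in the derivation of~\eqref{eq: change 1}, gives $R \ge \lim_{\varepsilon \to 0+} \inf_{y \in A^\varepsilon_{\ell_2}} \tilde{\mathcal J}(y)$. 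Tightness of $\tilde{\mathcal J}$ lets me invoke~\eqref{eq: tight blow up} for $A$ in $(\mathcal Y, \ell_2)$, converting the right-hand side into $\inf_{y \in \cl A} \tilde{\mathcal J}(y)$. The hypothesis $\cl(F^{-1}(A)) = F^{-1}(\cl A)$ combined with the definition of $\tilde{\mathcal J}$ then produces
\[
\inf_{y \in \cl A} \tilde{\mathcal J}(y) = \inf_{y \in \cl A} \inf_{x \in F^{-1}(y)} \mathcal I(x) = \inf_{x \in F^{-1}(\cl A)} \mathcal I(x) = \inf_{x \in \cl B} \mathcal I(x),
\]
closing the chain. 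Call the common value $m$.

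For the attainment, I would use that tightness of $\tilde{\mathcal J}$ entails lower semi-continuity, so the nested non-empty family of compact sets $\cl A \cap \{\tilde{\mathcal J} \le m + 1/k\}$ has non-empty intersection; any point $y^*$ in it lies in $\cl A$ and satisfies $\tilde{\mathcal J}(y^*) = m$. Since $F^{-1}(y^*) \subseteq F^{-1}(\cl A) = \cl B$, a minimizing sequence $\bar x_k \in F^{-1}(y^*)$ for the infimum defining $\tilde{\mathcal J}(y^*)$ already sits inside $\cl B$ with $\mathcal I(\bar x_k) \to m$. The main obstacle will be the last step, namely producing an actual minimizer $x^* \in \cl B$ rather than just a minimizing sequence: the infimum in the definition of $\tilde{\mathcal J}(y^*)$ need not be attained under the bare hypotheses. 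I would close this gap by exploiting lower semi-continuity of $\mathcal I$ (automatic in the rate-function applications of the proposition) together with uniform continuity of $F$ on $\{\mathcal I \le m+1\}$ and compactness of $\{\tilde{\mathcal J} \le m+1\}$, which jointly give sufficient compactness on $F^{-1}(y^*) \cap \{\mathcal I \le m+1\}$ to extract a convergent subsequence whose limit, by lower semi-continuity of $\mathcal I$, is the desired attainer.
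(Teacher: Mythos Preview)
Your argument for the equality in~\eqref{eq: tight blow up} is correct and follows the paper's route exactly: recycle~\eqref{eq: change 1}, apply~\eqref{eq: tight blow up} to the tight function $\tilde{\mathcal J}$ on $\mathcal Y$, and close the chain via the hypothesis $\cl(F^{-1}(A))=F^{-1}(\cl A)$ together with the definition of $\tilde{\mathcal J}$.

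The attainment clause is where your instincts are right but your proposed fix fails. The paper dispatches it in one line---``the penultimate infimum is attained at some $y\in\cl A$ by tightness of $\tilde{\mathcal J}$, hence the last infimum is attained at some $x\in\cl B$''---glossing over exactly the issue you flag: attainment of $\inf_{y\in\cl A}\tilde{\mathcal J}(y)$ at some $y^*$ does \emph{not} force $\inf_{x\in F^{-1}(y^*)}\mathcal I(x)$ to be attained. Your proposed repair does not work either: compactness of $\{\tilde{\mathcal J}\le m+1\}$ lives in $\mathcal Y$ and gives no compactness in $\mathcal X$, and uniform continuity of $F$ on $\{\mathcal I\le m+1\}$ only controls the forward map, not preimages. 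In fact the attainment claim is false under the bare hypotheses, even with $\mathcal I$ lower semi-continuous: take $\mathcal X=\R^2$, $\mathcal Y=\R$, $F(x_1,x_2)=x_1$, and $\mathcal I(x_1,x_2)=x_1^2+e^{-x_2}$ for $x_2\ge 0$ (else $+\infty$). Then $F$ is globally uniformly continuous, $\tilde{\mathcal J}(y)=y^2$ is tight, and for $A=\{0\}$ one has $\cl B=\{0\}\times\R=F^{-1}(\cl A)$ with $\inf_{\cl B}\mathcal I=0$ never attained. So you are right to be wary; the clause requires an extra hypothesis (for instance, compactness of the fibres $F^{-1}(y)\cap\{\mathcal I\le\alpha\}$, or tightness of $\mathcal I$ itself) that neither you nor the paper supplies.
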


For a direct application of this result to random walks trajectories, see Proposition~\ref{prop: upper}.

\begin{proof}
By \eqref{eq: change 1} and  \eqref{eq: tight blow up} applied to $\mathcal I$ replaced by $\tilde{ \mathcal J}$, which is tight by assumption (and hence lower semi-continuous), we have
\[
\inf_{x \in \cl(F^{-1}(A))} \mathcal I(x) \ge \lim_{\delta \to 0+} \inf_{x \in (F^{-1}(A))^\delta_{\ell}} \mathcal I (x) \ge \lim_{\varepsilon \to 0+} \inf_{y \in A^\varepsilon_{\ell_2} }  \tilde{\mathcal J}(y) = \inf_{y \in \cl A}  \tilde{\mathcal J}(y)  = \inf_{x \in F^{-1}(\cl A)} \mathcal I(x).
\]
By $\cl (F^{-1}(A))=F^{-1}(\cl A)$, the inequalities above are equalities, thus establishing~\eqref{eq: tight blow up}. The penultimate infimum is attained at some $y \in \cl A$ by tightness of $\tilde{\mathcal J}$, hence the last infimum is attained at some $x \in \cl B$, as needed.
\end{proof}

Our last simple claim, analogous to Lemma 4.1.5 in~\cite{DemboZeitouni}, describes the behaviour of metric LDPs under inclusions.

\begin{lemma} \label{lem: inclusions}
Let $(Z_n)_{n \ge 1}$ be a sequence of random elements of  a metric space $(\mathcal X, \ell)$ and  $\mathcal Y \subset \mathcal X$ be a Borel set such that $\P(Z_n \in \mathcal Y)=1$  for each $n \in \N$.

\begin{enumerate} 
\item \label{item: restrict} If $(Z_n)_{n \ge 1}$ satisfies a metric LDP in $(\mathcal X, \ell)$ with some speed and a rate function $\mathcal I$ such that $\mathcal I = +\infty$ on $\mathcal Y^c$, then $(Z_n)_{n \ge 1}$ satisfies the metric LDP in $(\mathcal Y, \ell)$ with the same speed and the rate function $\mathcal I|_\mathcal Y$. 

\item \label{item: extend} Conversely, if $(Z_n)_{n \ge 1}$ satisfies a metric LDP in $(\mathcal Y, \ell)$ with some speed and a rate function $\mathcal I$, then $(Z_n)_{n \ge 1}$ satisfies the metric LDP in $(\mathcal X, \ell)$ with the same speed and the rate function $\cl \mathcal I$, where $\mathcal I$ is extended to $\mathcal X$ by putting $\mathcal I := + \infty$ on $\mathcal Y^c$. 
\end{enumerate}
\end{lemma}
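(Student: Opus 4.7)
The plan is to verify both directions of the metric LDP definition~\eqref{eq: LD extended} directly, using throughout that $\P(Z_n \in A) = \P(Z_n \in A \cap \mathcal Y)$ for every Borel $A \subset \mathcal X$, which is immediate from $\P(Z_n \in \mathcal Y)=1$. Since $\mathcal Y$ is Borel in $\mathcal X$, the Borel subsets of $\mathcal Y$ coincide with the Borel subsets of $\mathcal X$ contained in $\mathcal Y$, so measurability causes no trouble.

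For part~\ref{item: restrict} I would handle the upper bound by applying the $\mathcal X$-LDP directly to any Borel $B \subset \mathcal Y$; the hypothesis $\mathcal I = +\infty$ on $\mathcal Y^c$ forces the infimum of $\mathcal I$ over the $\mathcal X$-$\varepsilon$-neighbourhood of $B$ to equal the infimum of $\mathcal I|_\mathcal Y$ over $B^\varepsilon_\ell \cap \mathcal Y$, which is exactly what the $\mathcal Y$-LDP requires. For the lower bound, given a point $x$ in the $\mathcal Y$-interior of $B$, I would invoke the subspace topology to find an $\mathcal X$-open $U$ with $x \in U$ and $U \cap \mathcal Y \subset B$, and use $\P(Z_n \in B) \ge \P(Z_n \in U \cap \mathcal Y) = \P(Z_n \in U)$. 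The $\mathcal X$-LDP lower bound applied to the open set $U$ then gives $\liminf \frac{1}{a_n} \log \P(Z_n \in U) \ge -\inf_U \mathcal I \ge -\mathcal I(x)$, and infimizing over $x$ finishes the step.

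For part~\ref{item: extend} I would start from $\P(Z_n \in B) = \P(Z_n \in B \cap \mathcal Y)$ for Borel $B \subset \mathcal X$ and apply the $\mathcal Y$-LDP to $B \cap \mathcal Y$. The upper bound follows because the $\mathcal Y$-$\varepsilon$-neighbourhood of $B \cap \mathcal Y$ is contained in $B^\varepsilon_\ell$, and $\cl \mathcal I \le \mathcal I$ on $\mathcal X$ (with $\mathcal I$ extended by $+\infty$) drives the required inequality through. The main obstacle is the lower bound. Given $x$ in the $\mathcal X$-interior of $B$, I would pick an $\mathcal X$-open $U$ with $x \in U \subset B$, apply the $\mathcal Y$-LDP lower bound to the $\mathcal Y$-open set $U \cap \mathcal Y$ to get $\liminf \frac{1}{a_n} \log \P(Z_n \in U) \ge -\inf_{U \cap \mathcal Y} \mathcal I = -\inf_U \mathcal I$ (the equality again uses $\mathcal I = +\infty$ off $\mathcal Y$), and then invoke the standard identity $\inf_U f = \inf_U \cl f$, valid for every open set $U$ and every $f : \mathcal X \to [0, \infty]$. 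This identity, immediate from representation~\eqref{eq: lsc regularization}, is precisely what forces the closure operation in the target rate function. Concluding with $\inf_U \cl \mathcal I \le \cl \mathcal I(x)$ and infimizing over $x$ yields the required lower bound.

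Finally, verifying that $\cl \mathcal I$ is a bona fide rate function is routine: lower semi-continuity is built in, nonnegativity is inherited from $\mathcal I \ge 0$, and $\cl \mathcal I(y_0) \le \mathcal I(y_0) < \infty$ for any $y_0$ in the effective domain of $\mathcal I$ ensures $\cl \mathcal I \not\equiv +\infty$.
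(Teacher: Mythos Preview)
Your proposal is correct and follows essentially the same route as the paper's proof. The only cosmetic difference is that the paper reduces the lower bound to the case of open sets and then uses a single chain of equalities~\eqref{eq: LDP inclusions}, whereas you work pointwise (pick $x\in\intr B$, find an open $U\ni x$); these are standard equivalent formulations, and both parts invoke exactly the same two ingredients: $\mathcal I=+\infty$ on $\mathcal Y^c$ to match infima over $U$ and $U\cap\mathcal Y$, and the identity $\inf_U \mathcal I=\inf_U \cl\mathcal I$ on open $U$ from~\eqref{eq: lsc regularization} (the paper's~\eqref{eq: regularization =}) to pass to $\cl\mathcal I$ in part~\ref{item: extend}.
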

\begin{proof}
\ref{item: restrict}) Clearly, $\mathcal I|_\mathcal Y$ is non-negative, lower semi-continuous, and not identically $+\infty$ (otherwise $\mathcal I = +\infty$ on $\mathcal X$). By monotonicity of probability, the lower bound in a metric LDP follows if we establish it for open sets. Since the topology of $(\mathcal Y, \ell)$ is the subspace topology induced from $(\mathcal X, \ell)$, for every set  $B \subset \mathcal Y$ that is open in $(\mathcal Y, \ell)$ we have $B= \tilde B \cap \mathcal Y$ for some $\tilde B \subset \mathcal X$ open in $(\mathcal X, \ell)$. Then, denoting by $(a_n)_{n \ge 1}$ the speed in the metric LDP,
\begin{equation} \label{eq: LDP inclusions}
- \inf_{y\in B} \mathcal I|_{\mathcal Y}(y) = - \inf_{x \in \tilde B} \mathcal I(x) \le  \liminf_{n\to\infty}\frac{1}{a_n}\log \P(Z_n \in \tilde B)  = \liminf_{n\to\infty}\frac{1}{a_n} \log \P(Z_n \in B), 
\end{equation}
where in the first equality we used that $\mathcal I = +\infty$ on $\mathcal Y^c$ and in the last one we used that $\P(Z_n \in \mathcal Y)=1$  for each $n$. This proves the lower bound required. The upper bound required follows from the fact that $\inf_{x \in B^\varepsilon_{\mathcal X}} \mathcal I(x) = \inf_{y\in  B^\varepsilon_{\mathcal Y}} \mathcal I|_{\mathcal Y}(y)$   for every $\varepsilon >0$.

\ref{item: extend}) For every $\tilde B \subset \mathcal X$ open in $(\mathcal X, \ell)$, the set $B:=\tilde B \cap \mathcal Y$ is open in $(\mathcal Y, \ell)$. Then \eqref{eq: LDP inclusions} holds true (switch the sides in both equalities), and the lower bound required follows from the equality $\inf_{x \in \tilde B} \mathcal I(x)  = \inf_{x \in \tilde B} \cl \mathcal I(x) $; see~\eqref{eq: regularization =}. And the upper bound follows from the fact that $\inf_{y\in  B^\varepsilon_{\mathcal Y}} \mathcal I(y) \ge \inf_{x \in B^\varepsilon_{\mathcal X}} \cl \mathcal I(x) $   for every $\varepsilon >0$, which holds true by $\mathcal I \ge \cl \mathcal I$.
\end{proof}

\section{The weak-* and related topologies on $BV[0,1]$} \label{sec: weak-*}

In this section we introduce the weak-* topology $W_*$ on the space of functions of bounded variation, then present a convenient metric topology $\widetilde{W}_*$ that coincides with $W_*$ on strongly bounded sets.

\subsection{The weak-* topology and a related metric} \label{sec: weak-* and rho_*}
Every $h \in BV[0,1]$ is the distribution function of the $\R^d$-valued finite Borel measure on $[0,1]$, which we denote by $dh$, that satisfies $dh([0,x])=h(x)$ for $x \in [0,1]$. As in the case $d=1$, this correspondence is bijective (Folland~\cite[Theorem~3.29]{Folland}). Note that~\cite{Folland} considers only complex-valued measures but all the cited results of \cite{Folland} are actually valid for any $d \ge 1$ since the consideration of $\R^d$-valued finite measures is coordinate-wise. For example, the integral  of a measurable function $f:[0,1] \to \R^d$ w.r.t.\ $dh$, is given by
\begin{equation} \label{eq: integral def}
\int_0^1 f \cdot dh := \sum_{k=1}^d \int_0^1 f^{(k)} d h^{(k)}, \qquad h \in BV[0,1],
\end{equation}
with the agreement that the notation above always means integration over $[0,1]$.

Recall that $\Var(h)$ denotes the  total variation of an $h \in BV[0,1]$; see~\eqref{eq: Var def}. This is a norm on $BV[0,1]$, and it generates a topology. Both will be referred to as  {\it strong}.


Denote by $C[0,1]=C([0,1]; \R^d)$ the set of continuous functions on $[0,1]$, and equip it with the supremum norm  $\| \cdot \|_\infty$. By the Riesz theorem (\cite[Theorem~7.17]{Folland}), the dual of $(C[0,1], \| \cdot \|_\infty)$ is isometrically isomorphic to $(BV[0,1], \Var(\cdot))$ since we regard $BV[0,1]$ as the space of finite $\R^d$-valued Borel measures on $[0,1]$. In particular, we have
\begin{equation} \label{eq: var = operator}
\Var(h)= \sup_{f \in C[0,1]: \| f\|_\infty \le 1} \int_0^1 f \cdot dh, \qquad h \in BV[0,1],
\end{equation}
i.e.\ the strong (total variation) norm is the operator norm. The {\it weak-* topology} on $BV[0,1]$, denoted by $W_*$, is the coarsest topology such that all the linear functionals on $BV[0,1]$ of the form $h \mapsto \int_0^1 f \cdot dh$ for $f \in C[0,1]$ are continuous. The convergence defined by $W_*$ is called the weak-* convergence; it is traditionally referred to as weak convergence in probabilistic literature. 


We would want to apply our contraction principle (Theorem~\ref{thm: contraction general}) to the natural embedding  $(BV[0,1], \rho_1') \to (BV[0,1], W_*)$, but we should seek for a substitute of the weak-* topology $W_*$, which it is known to be non-metrizable. However, it is metrizable on strongly bounded subsets of $BV[0,1]$ (cf. a general metrization result~\cite[Theorem~V.5.1]{DunfordSchwartz}) 
This can be done using an explicit metric $\rho_*$ defined as follows.

Consider the norm
$$
{\|h\|}_* :=\int_0^1 |h(s)| ds + |h(1)|, \quad h \in BV[0,1].
$$
on $BV[0,1]$, which is simply the $L^1$-norm of $h$ w.r.t.\  the sum of the Lebesgue measure  on $[0,1]$ and the $\delta$-measure at $1$. The metric $\rho_*(g,h):={\|g-h\|}_*$ generates a topology, which we denote by $\widetilde{W}_*$. We have the following.

\begin{theorema}[H\"{o}gn\"{a}s~\cite{Hognas}] \label{thma: Hognas}
Suppose that $\{ g_\alpha\}_{\alpha \in A} \subset BV[0,1]$ is a strongly bounded net, i.e.\ $\sup_{\alpha \in A} Var(g_\alpha)  <\infty $. Then the following are equivalent: \\
1) $\lim_{\alpha \in A} \|g_\alpha\|_*=0$; \\
2) $\lim_{\alpha \in A} \int_0^1 f \cdot d g_\alpha = 0$ for any $f \in C[0,1]$, i.e.\ $\{ g_\alpha\}_{\alpha \in A}$ converges weakly-* to zero on $[0,1]$.
\end{theorema}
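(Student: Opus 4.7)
The key technical tool is the integration by parts formula for c\`adl\`ag BV functions against $C^1$ test functions. Using that the Lebesgue--Stieltjes measure $dg$ satisfies $dg(\{0\})=g(0)$, the boundary terms at $0$ cancel, leaving
\[
\int_{[0,1]} F \cdot dg \;=\; F(1)\cdot g(1) - \int_0^1 F'(s)\cdot g(s)\, ds, \qquad F \in C^1[0,1],\ g \in BV[0,1].
\]
This identity underlies both directions: (1) $\Rightarrow$ (2) is essentially its direct use, while (2) $\Rightarrow$ (1) uses it in reverse to recover $L^1$-information about $g_\alpha$ from the weak-$*$ convergence of $dg_\alpha$.

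For (1) $\Rightarrow$ (2), apply the identity with $F=f\in C^1[0,1]$. Then
$\bigl|\int f\cdot dg_\alpha\bigr|\le |f(1)|\,|g_\alpha(1)| + \|f'\|_\infty\int_0^1 |g_\alpha(s)|\,ds \to 0$.
For a general $f\in C[0,1]$, approximate uniformly by $f_n\in C^1[0,1]$ (Stone--Weierstrass) and combine with strong boundedness via $\bigl|\int (f-f_n)\cdot dg_\alpha\bigr|\le \|f-f_n\|_\infty\cdot \sup_\alpha\Var(g_\alpha)$.

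For (2) $\Rightarrow$ (1), first test against the constant $f\equiv e_k$ to get $g_\alpha^{(k)}(1)=\int e_k\cdot dg_\alpha\to 0$, hence $|g_\alpha(1)|\to 0$. Next, for an arbitrary $\phi\in C[0,1]$, set $F(s):=\int_s^1\phi(u)\,du$, so that $F\in C^1[0,1]$, $F(1)=0$, and $F'=-\phi$. The IBP identity collapses to $\int F\cdot dg_\alpha = \int_0^1\phi\cdot g_\alpha\,ds$, and (2) applied to $F$ yields $\int_0^1\phi\cdot g_\alpha\,ds \to 0$ for every $\phi\in C[0,1]$.

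It remains to promote this weak convergence against continuous test functions to convergence of $\int_0^1|g_\alpha|\,ds$ to $0$, which is the main obstacle. Strong boundedness gives a uniform $L^\infty$ bound $\|g_\alpha\|_\infty\le |g_\alpha(0)|+\Var(g_\alpha)\le 2\sup_\beta \Var(g_\beta)$. Separability of $C[0,1]$ and Banach--Alaoglu make both the weak-$*$ and $\widetilde{W}_*$ topologies metrizable on strongly bounded sets, so it suffices to argue sequentially. Suppose $\int_0^1|g_{\alpha_n}|\,ds\not\to 0$ along some sequence; Helly's selection theorem extracts a subsequence $g_{n_k}$ converging pointwise on $[0,1]$ to some $g^*\in BV[0,1]$. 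Dominated convergence gives $\int_0^1\phi\cdot g_{n_k}\,ds\to \int_0^1\phi\cdot g^*\,ds$ for every $\phi\in C[0,1]$, and comparison with the already-established limit $0$ forces $g^*\equiv 0$ almost everywhere. A final application of dominated convergence yields $\int_0^1|g_{n_k}|\,ds\to 0$, contradicting the extraction. The essential point is that the $L^1$ conclusion genuinely needs the uniform variation bound (weak convergence alone fails in $L^1$, as witnessed by $\sin(2\pi ns)$ whose variation blows up), and Helly's theorem is exactly the tool that captures this.
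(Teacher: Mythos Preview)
The paper does not prove this result; it simply cites H\"ogn\"as~\cite{Hognas} for the case $d=1$ and observes that the general case reduces coordinate-wise via~\eqref{eq: integral def}. Your proposal, by contrast, supplies a self-contained direct proof valid in any dimension.

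Your argument is correct. The integration-by-parts identity is valid under the paper's convention $dg([0,x])=g(x)$ (so that $dg(\{0\})=g(0)$ and the boundary term at $0$ cancels), and both implications follow as you describe; the paper in fact invokes the same identity later, in the proof of Theorem~\ref{thm: weighted}. Two minor points are worth making explicit. First, the reduction from nets to sequences in $(2)\Rightarrow(1)$: since both the weak-$*$ topology and $\widetilde{W}_*$ are metrizable on the strongly bounded ball $\{h:\Var(h)\le M\}$, the implication ``weak-$*$ convergence $\Rightarrow$ $\rho_*$-convergence'' for nets on that ball is equivalent to continuity of the identity map between two metric spaces, and continuity between metric spaces can be checked on sequences. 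Your sentence about metrizability says exactly this, but it could be unpacked to avoid any appearance of circularity. Second, Helly's selection theorem is classically stated for real-valued BV functions; for $\R^d$-valued $g_\alpha$ one applies it coordinate-wise (with nested subsequences), which is routine but worth mentioning. Neither point is a genuine gap.
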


By~\eqref{eq: integral def}, this result fully reduces to $d=1$, the only case considered in~\cite{Hognas}. 

\begin{remark} \label{rem: unif bounded}
If the net $\{ g_\alpha\}_{\alpha \in A}$ is a sequence, i.e.\ $A=\N$, then by the uniform boundedness principle and \eqref{eq: var = operator},  $\sup_{\alpha \in A} \Var(g_\alpha) <\infty $  if and only if $\sup_{\alpha \in A} \big | \int_0^1 f \cdot d g_\alpha \big| <\infty$ for any $f \in C[0,1]$. Hence, a weakly-* convergent sequence also converges in the metric $\rho_*$ (but not vice versa).
\end{remark}

\begin{cor} \label{cor: Hognas}
A strongly bounded subset of $BV[0,1]$ that is closed or compact in one of the topologies $\widetilde{W}_*$ and $W_*$, is closed and compact in each of them.
\end{cor}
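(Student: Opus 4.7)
The plan is to extract a ``master equivalence'' from Theorem~\ref{thma: Hognas} that converts a strongly bounded $\widetilde{W}_*$-convergent net into a $W_*$-convergent net with the same limit (and vice versa), and then read closedness and compactness off from this single equivalence.

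The master step is: for any net $\{g_\alpha\}_{\alpha \in A} \subset BV[0,1]$ with $\sup_\alpha \Var(g_\alpha) < \infty$ and any $g \in BV[0,1]$, convergence $g_\alpha \to g$ in $\widetilde{W}_*$ is equivalent to convergence in $W_*$. To see this, apply Theorem~\ref{thma: Hognas} to the translated net $\{g_\alpha - g\}$, which is again strongly bounded because $\Var(g_\alpha - g) \le \Var(g_\alpha) + \Var(g)$ and $g \in BV[0,1]$ forces $\Var(g) < \infty$. Then $\rho_*(g_\alpha, g) = \|g_\alpha - g\|_* \to 0$ is precisely condition 1) of the theorem for $g_\alpha - g$, which is equivalent to condition 2), i.e.\ $\int_0^1 f \cdot d g_\alpha \to \int_0^1 f \cdot dg$ for every $f \in C[0,1]$, which is $W_*$-convergence.

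From the master equivalence, closedness transfers both ways. Let $S \subset BV[0,1]$ be strongly bounded and suppose $S$ is closed in one of the topologies, say $W_*$. Given a sequence (which suffices by metricity) $g_n \in S$ with $\rho_*(g_n, g) \to 0$, the sequence is strongly bounded since $S$ is, so the master equivalence gives $g_n \to g$ in $W_*$; closedness of $S$ in $W_*$ forces $g \in S$, so $S$ is closed in $\widetilde{W}_*$. Conversely, if $S$ is closed in $\widetilde{W}_*$, take a net $g_\alpha \in S$ with $g_\alpha \to g$ in $W_*$ (nets are needed since $W_*$ is not metric); $\{g_\alpha\}$ is strongly bounded, so the master equivalence yields $g_\alpha \to g$ in $\widetilde{W}_*$, and sequential (hence net) closedness in the metric topology $\widetilde{W}_*$ places $g$ in $S$.

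Compactness transfers by the same principle. If $S$ is compact in one of the topologies $T$, every net in $S$ admits a subnet $T$-convergent to some $g \in S$; the subnet lies in $S$ and is thus strongly bounded, so the master equivalence promotes the convergence to the other topology with the same limit $g \in S$, establishing compactness there. The only subtle point I anticipate is bookkeeping between nets and sequences: $\widetilde{W}_*$ is metric, so its closedness/compactness are sequential, but $W_*$-closedness must be tested with nets. Since Theorem~\ref{thma: Hognas} is already stated for nets, this causes no real difficulty and is the entire content beyond the invocation of Högnäs's theorem.
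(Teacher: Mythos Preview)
Your argument has a genuine gap: you establish that closedness transfers between the two topologies and that compactness transfers, but you never show that a strongly bounded \emph{closed} set is \emph{compact}. The corollary asserts that ``closed or compact in one'' implies ``closed \emph{and} compact in each,'' so starting from a strongly bounded $S$ that is merely closed (say in $\widetilde{W}_*$), your argument yields $S$ closed in $W_*$ but stops there---nothing you wrote produces compactness in either topology. The missing ingredient is the Banach--Alaoglu theorem: a strongly bounded $W_*$-closed subset of $BV[0,1]$ (the dual of $C[0,1]$) is $W_*$-compact. The paper invokes this in its opening sentence. With it added, your chain closes: closed in $\widetilde{W}_*$ $\Rightarrow$ closed in $W_*$ (your transfer) $\Rightarrow$ compact in $W_*$ (Banach--Alaoglu) $\Rightarrow$ compact in $\widetilde{W}_*$ (your transfer). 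The ``compact in one'' starting point is already fine, since compactness in a Hausdorff space gives closedness for free.

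Aside from this omission, your ``master equivalence'' via Theorem~\ref{thma: Hognas} applied to $g_\alpha - g$, together with the net/subnet characterizations of closedness and compactness, is correct and is essentially the paper's approach as well. The paper phrases the compactness step via a finite-intersection-type criterion rather than subnets, but the substance is the same.
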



This result is the main reason why we have chosen to employ the weak-* topology.

\begin{proof}
It follows from the Banach--Alaoglu theorem that strongly bounded sets that are closed in $W_*$ are compact in $W_*$. Conversely, every set compact in the metric topology $\widetilde{W}_*$ is closed in $\widetilde{W}_*$. The claim then follows from  Theorem~\ref{thma: Hognas} because in a topological space, a set is closed if and only if together with any converging net it contains all its limits (Engelking~\cite[Corollary~1.6.4]{Engelking}), and it is compact if and only if any decreasing sequence of its closed non-empty subsets has non-empty intersection. 
\end{proof}

Let us clarify the relationship between the the topologies $\widetilde{W}_*$ and $W_*$. Denote by $\seq(W_*)$ the topology on $BV[0,1]$ where a set is closed if and only if it is {\it sequentially} closed in $W_*$; we will use this topology in Section~\ref{sec: integral}. We have $W_* \subset \seq(W_*)$ because in any topology, a closed set is sequentially closed. We also have $\widetilde{W}_* \subset \seq(W_*)$ since by Remark~\ref{rem: unif bounded}, sequential convergence in $W_*$ implies convergence in~$\widetilde{W}_*$.  However, this argument {\it does not} imply  that $\widetilde{W}_*$ is weaker than $W_*$ because it is known that  $W_* \neq \seq(W_*)$. And indeed, the topologies $\widetilde{W}_*$ and $ W_*$ are incomparable, with $\widetilde{W}_* \not \subset W_*$ following from the observation that for any $f_1, \ldots, f_n \in C[0,1]$, the set $\{h 
\in AC_0[0,1]: \int_0^1 f \cdot dh =0, 1 \le i \le n\}$ is unbounded in $\rho_*$.

\subsection{Comparison with the Skorokhod topologies} \label{sec: comparison}
First compare $\rho_*$ with the metrics $\rho_2$ and $\rho_2'$ defined in Section~\ref{sec: Skorokhod top}.

\begin{lemma} \label{lem: metrics}
For any $h \in BV([0,1]; \R^d)$ and $g \in D([0,1];\R^d)$, we have
$$
\int_0^1 |g(s)-h(s)|ds \le 2 d (\Var(h) - |h(0)| +1) \rho_2(g,h) + \pi d \rho_2^2(g,h)
$$
and
$$
\int_0^1 |g(s)-h(s)|ds \le 2 d (\Var(h)+1) \rho_2'(g,h) + \pi d (\rho_2')^2(g,h).
$$

\end{lemma}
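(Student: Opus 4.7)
The plan is to establish the dimension-free pointwise bound
\[
|g(s) - h(s)| \le \rho_2(g,h) + \omega_h\bigl(s, \rho_2(g,h)\bigr)
\]
for every $s \in [0,1]$, where $\omega_h(s, \varepsilon)$ denotes the oscillation of $h$ on $[s-\varepsilon, s+\varepsilon] \cap [0,1]$ with one-sided limits included, and then integrate using a Fubini-type estimate on this modulus. The resulting dimension-free inequality turns out to be stronger than both bounds stated in the lemma.

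For the pointwise bound, set $\varepsilon := \rho_2(g, h)$ and observe that $(s, g(s)) \in \Gamma g$. The definition of Hausdorff distance yields $(t, y) \in \Gamma h$ with $|s - t| \le \varepsilon$ and $|y - g(s)| \le \varepsilon$ in the Euclidean sense on $\R^d$. Since $y \in [h(t-), h(t+)]$, convexity of the Euclidean norm gives
\[
|y - h(s)| \le \max\bigl(|h(t-) - h(s)|, |h(t+) - h(s)|\bigr) \le \omega_h(s, \varepsilon),
\]
and the triangle inequality yields the claimed pointwise estimate.

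For the integration, let $V(s) := \Var(h; [0, s])$ be the usual cumulative variation of $h$, extended by $0$ to the left of $0$ and by $V(1)$ to the right of $1$; in the paper's convention, $V(1) = \Var(h) - |h(0)|$. Since $\omega_h(s, \varepsilon) \le V(s+\varepsilon) - V((s-\varepsilon)-)$, a direct change of variables gives
\[
\int_0^1 \omega_h(s, \varepsilon)\, ds \le 2 \varepsilon V(1) = 2\varepsilon\bigl(\Var(h) - |h(0)|\bigr).
\]
Combining, $\int_0^1 |g(s) - h(s)|\, ds \le \varepsilon(1 + 2(\Var(h) - |h(0)|))$, which is dominated by $2d(\Var(h) - |h(0)| + 1)\varepsilon + \pi d \varepsilon^2$ for every $d \ge 1$, giving the first inequality.

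For the $\rho_2'$ version one extra configuration arises: if the closest point $(t, y) \in \Gamma' h$ to $(s, g(s))$ lies on the added initial segment $\{0\} \times [0, h(0)]$, then necessarily $t = 0$, $s \le \varepsilon$, and $y = \lambda h(0)$ for some $\lambda \in [0, 1]$, giving $|g(s) - h(s)| \le \varepsilon + |h(0)| + \omega_h(s, \varepsilon)$ by splitting $|\lambda h(0) - h(s)| \le (1-\lambda)|h(0)| + |h(0) - h(s)|$ with the triangle inequality. The additional $|h(0)|$ integrated over $s \in [0, \varepsilon]$ contributes at most $|h(0)|\varepsilon$, so the resulting bound $\varepsilon(1 + 2\Var(h) - |h(0)|)$ is again dominated by $2d(\Var(h) + 1)\varepsilon + \pi d \varepsilon^2$. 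The only mild technicality is the convexity step at the temporal endpoints $t \in \{0, 1\}$, handled cleanly by the conventions $h(0-) := h(0)$ and $h(1+) := h(1)$; there is no serious obstacle to the argument.
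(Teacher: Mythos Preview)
Your argument is correct, and it is genuinely different from the paper's proof. The paper works in dimension $d=1$ first, interpreting $\int_0^1 |g(s)-h(s)|\,ds$ as the area of the region between the graphs, showing this region is contained in the $\rho_2(g,h)$-neighbourhood of the completed graph $\Gamma h$, and bounding that area via Steiner's inequality $\lambda((\Gamma h)^r)\le 2r\ell(\gamma)+\pi r^2$ (which is where the $\pi r^2$ term comes from). The extension to $d\ge 1$ is coordinate-wise, producing the factor $d$. Your route bypasses all of this planar geometry: the pointwise oscillation bound $|g(s)-h(s)|\le \varepsilon+\omega_h(s,\varepsilon)$ together with $\int_0^1 \omega_h(s,\varepsilon)\,ds\le 2\varepsilon(\Var(h)-|h(0)|)$ gives directly the dimension-free estimate $\int_0^1|g-h|\le (1+2(\Var(h)-|h(0)|))\,\rho_2(g,h)$, which is strictly sharper than the lemma as stated (no factor $d$, no quadratic term). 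The integration step $\int_0^1[V(s+\varepsilon)-V(s-\varepsilon)]\,ds\le 2\varepsilon V(1)$ is clean and holds for all $\varepsilon>0$; the left-limit in $V((s-\varepsilon)-)$ is harmless since $V$ has at most countably many jumps. Your treatment of the $\rho_2'$ case is also correct: the extra segment $\{0\}\times[0,h(0)]$ only matters for $s\le\varepsilon$ and contributes at most $|h(0)|\varepsilon$, which is absorbed into $2(\Var(h)+1)\varepsilon$.

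What the paper's approach buys is a direct geometric interpretation and a self-contained explanation of the constants $2$ and $\pi$; what your approach buys is simplicity (no Minkowski content, no Steiner inequality, no reduction to $d=1$) and a better bound.
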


\begin{proof}
We start by proving the first inequality for $d=1$. Consider the set
$$
U:=\big\{(s, x) \in \R^2: 0 \le s \le 1,  g(s) \wedge h(s) \le x \le  g(s) \vee h(s) \big \}.
$$
It is Borel because $g$ and $h$ are c\`adl\`ag on $[0,1]$. We claim that $U \subset \cl \big( (\Gamma h)^{\rho_2(g, h)} \bigr)$, where $(\Gamma h)^r $ denotes the Euclidean open $r$-neighbourhood  of $\Gamma h$, the completed graph of $h$. Then by Fubini's theorem, 
\begin{equation} \label{eq: content}
\int_0^1 |g(s)-h(s)|ds = \lambda(U) \le \lambda \big( \cl \big( (\Gamma h)^{\rho_2(g, h)} \bigr)\big).
\end{equation}
where $\lambda$ denotes the Lebesgue measure on the plane. 

In order to prove the claim, pick an $s \in [0,1]$. There is a point $(t, y) \in \Gamma h$ such that $|(t,y) - (s, g(s))| \le \rho_2(g,h)$. Hence a) $(s,x) \in \cl \big( (\Gamma h)^{\rho_2(g, h)} \bigr)$ for any $x \in  [g(s) \wedge y, g(s) \vee y]$, and b) since $\Gamma h = \gamma([0,1])$ for a continuous planar curve $\gamma \in \Pi h$ (see Section~\ref{sec: Skorokhod top}),  by the intermediate value theorem applied to the spatial coordinate of $\gamma$, for any $x \in [h(s) \wedge y, h(s) \vee y] $ there is a $u \in [s \wedge t, s \vee t]$ such that $(u,x) \in \Gamma h$, and by $|s-t| \le \rho_2(g, h)$ this implies $(s,x) \in \cl \big( (\Gamma h)^{\rho_2(g, h)} \bigr)$. Put together, a) and b) imply that $U \subset \cl \big( (\Gamma h)^{\rho_2(g, h)} \bigr)$, as claimed, by
$$
[g(s) \wedge h(s) , g(s) \vee h(s)] \subset [g(s) \wedge y, g(s) \vee y] \cup [h(s) \wedge y, h(s) \vee y].
$$

Furthermore, denote by $\ell (\gamma)$ the length of $\gamma$. It is easy to check, using the definition of the total variation of $h$, that 
\begin{equation} \label{eq: length <=}
\ell (\gamma) \le \Var(h)-|h(0)|+1. 
\end{equation}
Thus, $\gamma$ is a rectifiable curve (i.e.\ offinite length), therefore (Federer~\cite[Theorem~3.2.39]{Federer})
\begin{equation} \label{eq: Minkowski}
\ell (\gamma)= \lim_{r \to 0+}\lambda(\cl ((\Gamma h)^r)) /(2 r),
\end{equation}
where the limit is known as the one-dimensional {\it Minkowski content} of the set $\Gamma h$. 

On the other hand, for any compact connected planar set $F$, the function $r \mapsto \lambda(F^r) - \pi r^2$ is known to be concave on $(0, \infty)$ (Fast~\cite[Theorem on p.~139]{Fast} and Sz\H{o}kefalvi-Nagy~\cite[Theorem 1]{Sz.-Nagy}). Its right derivative at $0$ is $2 \ell(\gamma)$ by \eqref{eq: Minkowski}. Then, since $\Gamma h$ is connected, 
\begin{equation} \label{eq: Steiner}
\lambda \big( \cl \big( (\Gamma h)^r \bigr)\big) \le 2 r \ell(\gamma) + \pi r^2, \qquad r >0.
\end{equation}
This inequality, which  is sometimes referred to as {\it Steiner's inequality} (cf.~Steiner's formula), is actually available in~\cite[p.~146]{Fast}; the assumptions imposed in \cite{Fast} are satisfied since $\gamma$ is a rectifiable simple (i.e.\ injective) curve.  

Put together, inequalities \eqref{eq: content}, \eqref{eq: length <=}, \eqref{eq: Steiner} imply the first inequality of Lemma~\ref{lem: metrics} for $d=1$. This in turn proves the inequality in any dimension using that  $|x| \le |x^{(1)}| + \ldots + |x^{(d)}|$ for $x \in \R^d$, and $\Var(h^{(k)}) - |h^{(k)} (0)| \le \Var(h) - |h(0)|$ and $\rho_2(g^{(k)}, h^{(k)}) \le \rho_2 (g, h)$ for $k=1, \ldots, d$. The last inequality can be obtained from  definition \eqref{eq: rho Hausd} of the metric~$\rho_2$ as follows: first estimate $|(s,x)-(t,y)| \ge |(s,x^{(k)})-(t,y^{(k)})| $ and then, since the r.h.s.\ of this inequality does not depend on the remaining coordinates,  eliminate them from the constraints under the maximum and the minimum in~\eqref{eq: rho Hausd}.

The second inequality of Lemma~\ref{lem: metrics} follows by the same argument using that by \eqref{eq: rho and rho'}, the modified completed graphs $\Gamma' g$ and $\Gamma' h$ can be regarded as the usual completed graphs of the modified functions $g_0:= g \I_{(0,1]}$ and $h_0:= h \I_{(0,1]}$, which are c\`adl\`ag on $(0,1]$ and have right limits at $0$. Hence $\rho_2'(g,h)=\rho_2(g_0, h_0)$, and if $\gamma \in \Pi'(h)$ is a parametrization of $\Gamma' h$, then $\ell(\gamma) \le \Var(h)+1$. It remains to use that $\int_0^1 |g(s)-h(s)|ds =  \int_0^1 |g_0(s)-h_0(s)|ds$.
\end{proof} 

We now use Lemma~\ref{lem: metrics} to clarify the relationship between the  topologies introduced. With no risk of confusion, in the rest of the paper we use the original notation $M_i$, $M_i'$
for the induced topologies on  $BV[0,1]$. 
We have $\widetilde{W}_*  \subset M_1'$
since $\rho_2' \le \rho_1'$ and convergence of c\`adl\`ag functions in the metric $\rho_1'$ implies convergence of their values at the endpoint $1$. 

Neither $\widetilde{W}_* $ nor $W_*$ is comparable with $M_2$. For example, for $g_n:=\I_{[1-1/n,1)}$ and $g:=\I_{\{1\}}$, we have $\rho_2(g_n,g) \to 0$ but $\rho_*(g_n,g) \not \to 0$ as $n \to \infty$. However, by Lemma~\ref{lem: metrics}, convergence in either $\rho_2$ or $\rho_2'$  implies convergence in $\rho_*$ if the limit function is continuous at~$1$.
Moreover, $W_*$ is incomparable with $M_1$. For example, for $g_n:=\{n \, \cdot \}/\sqrt{n}$, where $\{ \cdot \}$ denotes the fractional part, we have $\rho_1(g_n,0) \to 0$ but $g_n$ does not converge weakly-* since its total variation explodes.  Likewise, $W_*$ is incomparable with the topology of uniform convergence, which is weaker than the strong topology on $BV[0,1]$ by inequality \eqref{eq: supnorm <= var} below. 

\section{Application of the weak-*-related topologies to the study of $I_D$} \label{sec: integral}

In this section we use the topologies $\widetilde{W}_*$ and $\seq(W_*)$ on $BV[0,1]$, introduced in Section~\ref{sec: weak-*}, to study properties of the rate function $I_D$ using the results of variational calculus. Namely, we prove {\it sequential weak-* lower semi-continuity} of $I_D$ and use this property to obtain an explicit {\it integral representation} for $I_D$, which is written using the {\it directional decomposition} of the total variation of functions in $BV([0,1]; \R^d)$. Moreover, we standardize the upper bound in the metric LDP of Borovkov and Mogulskii~\cite{BorovkovMogulskii2, BorovkovMogulskii3} for a few types of sets, and prove a standard LDP for trajectories of random walks  in the space $(BV[0,1], \rho_*)$. 

\subsection{Directional decomposition of total variation} Recall that $AC_0[0,1]$ denotes the set of coordinate-wise absolutely continuous functions  from $[0,1]$ to $\R^d$ that equal $0$ at $0$. These are exactly the distribution functions of $\R^d$-valued finite Borel absolutely continuous measures on $[0,1]$. For any $h \in BV[0,1]$, put $h_a(t):= \int_0^t h'(s) ds$, where  $h'$ exists a.e.\ and is integrable by \cite[Proposition~3.30]{Folland}, which also ensures that the measure $dh_s:=dh-dh_a$ is singular. We say that $h=h_a +h_s$ is the {\it Lebesgue  decomposition} of the vector-valued function $h$;  let us stress  that $h_a \in AC_0[0,1]$.

Denote by $V^h$ the total variation {\it function} of an $h \in BV[0,1]$, defined by $V^h(t):=\Var(h(\cdot \wedge t))$ for $t \in [0,1]$; cf.~\eqref{eq: Var def}. It is non-decreasing,  c\`adl\`ag, and satisfies $\Var(h)=V^h(1)$, hence $V^h \in BV[0,1]$. By \cite[Theorem~3.29 and Exercise~21 in Section~3.3]{Folland}, $d V^h$ is the total variation {\it measure} of the vector-valued measure $d h$, that is the equality
\begin{equation} \label{eq: TV measure}
dV^h(B) = \sup \left \{\sum_{i=1}^\infty |dh(B_i)|: B_1, B_2, \ldots \text{ are disjoint Borel sets}, \, \bigcup_{i=1}^\infty B_i = B \right \}
\end{equation}
holds true for every Borel set $B\subset[0,1]$. This implies that $V^{h_1+h_2}=V^{h_1} + V^{h_2}$ whenever $d h_1$ and $d h_2$ are singular. In particular,  we have $V^h=V^{h_a} + V^{h_s}$.

It holds $d h \ll d V^h$ and the Radon--Nykodim density $\dot{h}:[0,1] \to \R^d$, defined by $d h = \dot h  \, d V^h $, satisfies $|\dot h|=1$ $d V^h$-a.e.\ (\cite[Proposition~3.13.b]{Folland}). In particular, this implies that
\begin{equation} \label{eq: supnorm <= var}
\|h\|_\infty \le \Var(h), \qquad h \in BV[0,1].
\end{equation}

We say that the push-forward measure $d \sigma^h:= d V^h \circ (\dot h)^{-1}$ on the unit sphere $\S^{d-1}$ is the {\it directional decomposition} of the total variation of $h$. For example, if $d=1$, the Hahn--Jordan decomposition gives the unique representation $h=h^+-h^-$, where $h^\pm \in BV[0,1]$ are non-decreasing functions, and so $d \sigma^h= h^+(1) \delta_1 + h^-(1) \delta_{-1}$. Then $\Var(h) =\sigma^{h}(\S^{d-1})$ and 
\begin{equation} \label{eq: TV}
\Var(h) = \int_0^1 |h'(t)|dt + \sigma^{h_s}(\S^{d-1}).
\end{equation}

To prove \eqref{eq: TV}, note that it follows from \eqref{eq: TV measure} that the measure $dV^{h_a}$ is absolutely continuous (because so is $d h_a$). Then from the equalities  $h' dt = d h_a = \dot{h_a} \, d V^{h_a}$ and $|\dot{h_a}|=1$ $d V^{h_a}$-a.e., we see by equating densities that $d V^{h_a} = |h'| dt$ (because the unit vector $\dot{h_a}$ multiplied by the scalar density of $d V^{h_a}$ equals $h'$). This implies \eqref{eq: TV} by $d V^h = d V^{h_a} + d V^{h_s}$.

\subsection{Lower semi-continuity of $I_D$ w.r.t.\ $\rho_*$ and related results} \label{sec: variational}



Recall that $\mathcal D_{\mathcal L}$ is the subset of $\R^d$ where the Laplace transform of $X_1$ is finite. This set is convex. Denote by
\begin{equation} \label{eq: support function}
I_\infty(v):=\sup\{u \cdot v: u \in \mathcal{D}_{\mathcal L}\}, \qquad v \in \R^d,
\end{equation}
its {\it support  function}. This name reflects that $I_\infty$ equals the so-called {\it recession function} of~$I$ (Rockafellar~\cite[Theorem~13.3]{Rockafellar}), which is convex, lower semi-continuous and positively homogeneous on $\R^d$, and has the property (\cite[Theorem~8.5]{Rockafellar}) 
\begin{equation} \label{eq: recession function}
I_\infty(v)=\lim_{t \to \infty} I(u + v t)/t = \sup_{t >0} \big [(I(u+vt) - I(u) )/t \big ], \qquad u \in \mathcal{D}_{\mathcal L}, v \in \R^d.
\end{equation}
Note that in dimension $d=1$, we have $\intr \mathcal D_{\mathcal L} = (-I_\infty(-1), I_\infty(1))$.

We can now state the main result of the section.

\begin{theorem} \label{thm: I_D}
Assume that $X_1$ is a random vector in $\R^d$ such that $0 \in \intr \mathcal D_{\mathcal L}$. Then the functional $I_D$ on $BV[0,1]$, defined in \eqref{eq: I_D def}, is tight w.r.t.\ $\rho_*$. Moreover, we have
\begin{equation} \label{eq: I_D = integral}
I_D(h)= \int_0^1 I(h'(t)) dt + \int_{\S^{d-1}} I_\infty(\ell) \, \sigma^{h_s}(d \ell), \qquad h \in BV[0,1],
\end{equation}
which is dimension $d=1$ reads as
\begin{equation} \label{eq: I_D 1-D}
I_D(h)= \int_0^1 I(h'(t)) dt + h_s^+(1) I_\infty(1) + h_s^-(1) I_\infty(-1).
\end{equation}
\end{theorem}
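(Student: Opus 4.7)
The plan has three components: first I establish sequential $\rho_*$-lower semi-continuity of $I_D$ on strongly bounded subsets of $BV[0,1]$, then use this to derive the integral formula~\eqref{eq: I_D = integral}, and finally deduce tightness as a corollary. The one-dimensional specialization~\eqref{eq: I_D 1-D} drops out at the end.

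For the lower semi-continuity step, the key tool is a \emph{dual representation}
\[
I_D(h) = \sup \left\{ \int_0^1 u(s) \cdot dh(s) - \int_0^1 K(u(s))\,ds : u \in C([0,1]; \intr \mathcal D_{\mathcal L}) \right\},
\]
obtained by dualizing the pointwise identity $I(v)=\sup_u (u \cdot v - K(u))$ and applying Rockafellar's interchange of supremum and integral for convex normal integrands. To derive this, start from the $AC_0$ formula $I_D(h) = \int_0^1 I(h'(s))\,ds$ cited from~\cite[Theorem~5.3]{BorovkovMogulskii2} and identify the right-hand side with the displayed supremum; then extend to general $h \in BV[0,1]$ by applying the $AC_0$ identity to $h^{\mathbf t}$ and passing to the refinement limit, using that $\int_0^1 u \cdot d(h^{\mathbf t}) \to \int_0^1 u \cdot dh$ as $\mathbf t$ refines to include the jumps of $h$. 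Sequential $\rho_*$-l.s.c.\ on strongly bounded sets is then immediate: by Theorem~\ref{thma: Hognas} each linear functional $h \mapsto \int_0^1 u \cdot dh$ is continuous in $\rho_*$ on such sets (since weak-* convergence implies $\rho_*$-convergence there), and $\int_0^1 K(u(s))\,ds$ does not depend on $h$, so $I_D$ is a pointwise supremum of $\rho_*$-continuous functionals.

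For the integral representation, write $h=h_a+h_s$ and let $R(h)$ denote the right-hand side of~\eqref{eq: I_D = integral}. To show $I_D(h) \ge R(h)$, choose partitions $\mathbf t^{(n)}$ whose mesh shrinks around each jump of $h$ and around the support of $dV^{h_s}$: a subinterval of length $\tau$ containing a singular increment $\Delta$ contributes $\tau\, I(\Delta/\tau) \to |\Delta|\, I_\infty(\Delta/|\Delta|)$ by~\eqref{eq: recession function}, and these pieces, indexed by the direction $\dot h_s$ and weighted by $dV^{h_s}$, assemble via a Besicovitch-type differentiation argument into $\int_{\S^{d-1}} I_\infty(\ell)\, \sigma^{h_s}(d\ell)$. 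The reverse inequality $I_D(h) \le R(h)$ is obtained directly from the definition of $I_D$: for any finite $\mathbf t$, apply Jensen's inequality to $\int_0^1 I((h^{\mathbf t})'(s))\,ds$, split the slope $(h(t_{k+1})-h(t_k))/(t_{k+1}-t_k)$ into its absolutely continuous and singular parts, and use the recession bound $I(u+vt) \le I(u) + t\,I_\infty(v)$ implied by~\eqref{eq: recession function}.

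Tightness then follows readily: the bound~\eqref{eq: I_D bound} forces every sub-level set $\{h : I_D(h) \le \alpha\}$ to be strongly bounded in $BV[0,1]$, and by the $\rho_*$-lower semi-continuity established above they are $\rho_*$-closed, hence $\rho_*$-compact by Corollary~\ref{cor: Hognas}. The one-dimensional formula~\eqref{eq: I_D 1-D} is immediate from~\eqref{eq: I_D = integral} via the Hahn--Jordan decomposition $dh_s = dh_s^+ - dh_s^-$, which yields $\sigma^{h_s} = h_s^+(1)\delta_{+1} + h_s^-(1)\delta_{-1}$. The main obstacle will be the integral representation: matching upper and lower bounds on the singular contribution requires careful control of how piecewise-linear slopes over vanishing intervals convert singular mass into recession-function contributions, particularly for the purely singular continuous part of $dh_s$. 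If the direct construction proves unwieldy, an alternative is to identify $I_D$ as the sequential weak-* relaxation of $h \mapsto \int_0^1 I(h'(s))\,ds$ from $AC_0[0,1]$ to $BV[0,1]$ and invoke a classical BV-relaxation theorem of Goffman--Serrin--Reshetnyak type.
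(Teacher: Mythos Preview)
Your proposal contains the right ingredients but has a logical gap in the derivation of the dual representation. Passing to the refinement limit in $\int_0^1 u\cdot dh^{\mathbf t}$ gives only one inequality: for fixed continuous $u$ we have $I_C(h^{\mathbf t})\ge \int u\cdot dh^{\mathbf t}-\int K(u)$, and since $h^{\mathbf t}\to h$ weakly-* along refinements we obtain $I_D(h)\ge \int u\cdot dh-\int K(u)$, hence $I_D(h)\ge \sup_u[\,\cdot\,]$. The reverse inequality does \emph{not} follow from this argument, because after swapping the two suprema you face $\sup_{\mathbf t}\int u\cdot dh^{\mathbf t}$, which in general exceeds $\int u\cdot dh$ (take $d=1$, $h(t)=t^2$, $u(t)=1-2t$). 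So you cannot conclude the dual identity, and hence not lower semi-continuity, from Step~1 alone. Your plan can be repaired by proving $I_D\le R$ first (your Jensen/recession argument for this direction is correct), then identifying $R$ with the dual supremum via a Goffman--Serrin/Reshetnyak representation, and finally closing the loop with the easy inequality above; but this reorders your steps and makes Step~3 logically prior to Step~1.

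The paper proceeds differently and, arguably, more economically. It proves $\rho_*$-lower semi-continuity of $I_D$ \emph{directly}, without any integral or dual formula: given $g_n\to g$ in $\rho_*$, pass to a subsequence converging a.e.\ on a dense set containing $1$, take finite partitions $\mathbf s_k$ from that set, and use lower semi-continuity of $I$ on the finitely many slopes to get $I_C(g^{\mathbf s_k})\le \liminf_n I_C(g_n^{\mathbf s_k})\le \liminf_n I_D(g_n)$; letting $k\to\infty$ yields the claim. Tightness then follows exactly as you say. For the integral representation, the paper does \emph{not} build matching upper and lower bounds by hand (your ``main obstacle''), but instead invokes Buttazzo's relaxation theorem to identify $R=\cl_{\seq(W_*)}(I_C)$, and then sandwiches $R\le I_D\le I_C$ using the already-established lower semi-continuity of $I_D$ in $\seq(W_*)$ (via $\widetilde W_*\subset \seq(W_*)$) and the approximation $h^{\mathbf t_n}\to h$ weakly-*. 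This is precisely the ``alternative'' you flag at the end of your proposal; it sidesteps the delicate construction of partitions resolving the singular continuous part of $dh_s$, at the cost of citing a nontrivial result from the calculus of variations.
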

Formula \eqref{eq: I_D 1-D} is available in~\cite[Theorem~3.3]{BorovkovMogulskii2}. If $\mathcal D_{\mathcal L} = \R^d$, that is the Laplace transform of $X_1$ is finite on $\R^d$, then $I_\infty(v)= +\infty$ for $v \neq 0$, hence $I_D(h) = +\infty$ for $h \not \in AC_0[0,1]$. 

The advantage of integral representation \eqref{eq: I_D = integral} is in its explicitness. It becomes more transparent when compared with equality~\eqref{eq: TV}, where the total variation of the singular component of a function is expressed using its directional decomposition. We can get \eqref{eq: TV} by formally substituting the Euclidean norm $|\cdot|$ for $I$ in \eqref{eq: I_D = integral}.


The next two statement are corollaries to Theorem~\ref{thm: I_D}.

\begin{prop} \label{prop: LDP weak}
Assume that $X_1$ is a random vector in $\R^d$ such that $0 \in \intr \mathcal D_{\mathcal L}$. Then 
both random sequences $(S_n(\cdot)/n)_{n \ge 1}$ and $(S_{[n \cdot]}/n)_{n \ge 1}$  satisfy the (standard) LDP  in the separable metric space $(BV[0,1], \rho_*)$ 
with the {\it tight} convex rate function $I_D$. 
\end{prop}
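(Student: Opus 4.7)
The plan is to apply the contraction principle (Theorem~\ref{thm: contraction general}) to the identity map $F:(BV[0,1],\rho_1')\to(BV[0,1],\rho_*)$, and then to invoke the tightness of $I_D$ with respect to $\rho_*$ (Theorem~\ref{thm: I_D}) in order to promote the resulting metric LDP into a standard one. The metric LDP hypothesis needed to feed Theorem~\ref{thm: contraction general}—namely a metric LDP of each of $(S_n(\cdot)/n)_{n\ge1}$ and $(S_{[n\cdot]}/n)_{n\ge1}$ in $(BV[0,1],\rho_1')$ with rate function $I_D$—is already supplied by Theorem~\ref{thma: BM}.

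I would then verify the two continuity conditions for the identity map $F$ using Lemma~\ref{lem: metrics}. Its second inequality gives
$$\int_0^1|g-h|\,ds\le 2d(\Var(h)+1)\,\rho_2'(g,h)+\pi d\,\rho_2'(g,h)^2,$$
while any parametrization in $\Pi'$ is, as a continuous bijection between compact arcs, a homeomorphism and hence sends the endpoint $1$ to $(1,\cdot)$, yielding $|g(1)-h(1)|\le\rho_1'(g,h)$. Combining these with $\rho_2'\le\rho_1'$ shows that for every $h\in BV[0,1]$, $\rho_*(g,h)\to0$ whenever $\rho_1'(g,h)\to0$, which gives continuity of $F$ at every point of $\mathcal D_{I_D}\subset BV[0,1]$. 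For uniform continuity on the sub-level set $\{h:I_D(h)\le R\}$, the bound \eqref{eq: I_D bound} confines this set inside $\{h:\Var(h)\le R'\}$ with $R':=(R+c_2)/c_1$, and the same two inequalities give, for any $g,h$ in this set,
$$\rho_*(g,h)\le[2d(R'+1)+1]\,\rho_1'(g,h)+\pi d\,\rho_1'(g,h)^2,$$
providing the required uniform modulus of continuity.

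With these in hand, Theorem~\ref{thm: contraction general} yields a metric LDP in $(BV[0,1],\rho_*)$ with rate function $\cl\tilde{\mathcal J}$, where $\tilde{\mathcal J}=I_D$ since $F$ is the identity. By Theorem~\ref{thm: I_D}, $I_D$ is tight w.r.t.\ $\rho_*$, so its sub-level sets are $\rho_*$-compact, hence $\rho_*$-closed; thus $I_D$ is already $\rho_*$-lower semi-continuous and $\cl I_D=I_D$. The same tightness yields equality~\eqref{eq: tight blow up} with $\mathcal I=I_D$ and $\ell=\rho_*$, which converts the metric LDP into the standard one. Convexity of $I_D$ is recorded in Theorem~\ref{thma: BM}, and separability of $(BV[0,1],\rho_*)$ follows from the $\rho_*$-density of step functions with rational jumps of rational heights.

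The step I expect to do the real work is Theorem~\ref{thm: I_D}: the tightness of $I_D$ with respect to the comparatively weak metric $\rho_*$ is the only nontrivial ingredient, and it is what permits \eqref{eq: tight blow up} to kick in despite $(BV[0,1],\rho_*)$ failing to be complete. Everything else is essentially assembly: Lemma~\ref{lem: metrics} quantitatively controls $\rho_*$ by $\rho_1'$ on bounded-variation sets, and \eqref{eq: I_D bound} guarantees that sub-level sets of $I_D$ are bounded-variation sets, which is exactly the conjunction needed for uniform continuity on sub-level sets.
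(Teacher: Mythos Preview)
Your proof is correct and follows essentially the same route as the paper: apply Theorem~\ref{thm: contraction general} to the identity $(BV[0,1],\rho_1')\to(BV[0,1],\rho_*)$, verify its continuity hypotheses via Lemma~\ref{lem: metrics} together with $\rho_2'\le\rho_1'$ and~\eqref{eq: I_D bound}, and use the $\rho_*$-tightness of $I_D$ from Theorem~\ref{thm: I_D} to upgrade the resulting metric LDP to a standard one via~\eqref{eq: tight blow up}. The only minor deviation is your separability argument---the paper instead deduces it from compactness of each $\{\Var\le n\}$ in $\rho_*$ (Corollary~\ref{cor: Hognas})---but your explicit countable dense set works just as well.
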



The main application of this result is our LDP for  kernel-weighted sums  of i.i.d.\ random vectors in $\R^d$, presented in Section~\ref{Sec: weighted}. It is worth to compare  Proposition~\ref{prop: LDP weak} with the results by Gantert~\cite[Theorems~1 and~2]{Gantert}, who proved LDPs in $L^1$ (i.e., ``almost'' in the same topology as in our case but on a different space) for one-dimensional random walks with so-called semi-exponential increments.

\begin{proof}[{\bf Proof of Proposition~\ref{prop: LDP weak}.}]
This follows from Theorem~\ref{thma: BM} combined with Theorem~\ref{thm: contraction general} applied to the natural embedding $F: (BV[0,1], \rho_1') \to (BV[0,1], \rho_*)$, where $\tilde{\mathcal J} = \mathcal I = I_D$, and $I_D$ is a tight rate function w.r.t.\ $\rho_*$ by Theorem~\ref{thm: I_D}. The assumptions of Theorem~\ref{thm: contraction general} are satisfied because by Lemma~\ref{lem: metrics} and the inequality  $\rho_2' \le \rho_1'$, $F$ is continuous and it is uniformly continuous on strongly bounded subsets of $BV[0,1]$, and also on the sub-level sets of $\mathcal I$ by~\eqref{eq: I_D bound}. Lastly, by Corollary~\ref{cor: Hognas}, the metric space $(\{h: \Var(h) \le n \}, \rho_*)$ is compact, hence totally bounded, hence separable for every $n \in \N$, therefore $ (BV[0,1], \rho_*)$ is separable. 
\end{proof}

\begin{remark}
Note that we cannot apply Theorem~\ref{thm: contraction} instead of Theorem~\ref{thm: contraction general} in the proof presented because the metric space $(BV[0,1], \rho_*)$ is not complete. A way around is to consider the natural embedding of $(BV[0,1], \rho_1)$ into the complete metric space $(L^1[0,1], \rho_*)$.

Namely, for any $g \in L^1[0,1]$, put $I_D(g):=I_D(h)$ if there exists an $h \in BV[0,1]$ such that $\rho_*(g,h)=0$ and $I_D(g):=+\infty$ otherwise. Let us check that this extended version of $I_D$ remains lower semi-continuous. Since $I_D$ has this property on $(BV[0,1], \rho_*)$ by Theorem~\ref{thm: I_D}, it suffices to prove that for any $h_1, h_2, \ldots \in BV[0,1]$ and $g \in L^1[0,1] \setminus BV[0,1]$ such that $\lim_{n \to \infty} \rho_*(h_n, g)=0$ and $\rho_*(g,h)>0$ for every $h \in BV[0,1]$, we have $\liminf_{n\to\infty} I_D(h_n)=+\infty$. This follows from \eqref{eq: I_D bound}  because $\liminf_{n\to\infty} \Var (h_n)=+\infty$ (no subsequence of $(h_n)_n$ is  strongly bounded since otherwise by Corollary~\ref{cor: Hognas} we can choose a further subsubsequence converging in $\rho_*$ to some $h \in BV[0,1]$, hence we arrive at the contradictory $\rho_*(g,h)=0$). 

Theorem~\ref{thm: contraction} then implies that  $(S_n(\cdot)/n)_{n \ge 1}$ and $(S_{[n \cdot]}/n)_{n \ge 1}$ satisfy the LDP in $(L^1[0,1], \rho_*)$ with the tight rate function $I_D$; it reduces to the LDP in $(BV[0,1], \rho_*)$ by \cite[Lemma 4.1.5.b]{DemboZeitouni}.
\end{remark}

Let us use subscripts to indicate in which metric (or topology) we take closures.

\begin{prop} \label{prop: upper}
Assume that $X_1$ is a random vector in $\R^d$ such that $0 \in \intr \mathcal D_{\mathcal L}$. Let $B \subset BV[0,1]$ be such that $\cl_{\rho_1'}(B) = \cl_{\rho_*}(B)$ (where $\rho_1'$ is restricted to $BV[0,1]$). Then 
$$
 \inf_{h \in \cl_{\rho_1'}(B)} I_D(h) = \lim_{\varepsilon \to 0+}\inf_{h \in B^\varepsilon_{\rho_1'}}I_D(h),
$$
with the infimum on the l.h.s.\ attained at some $h \in \cl_{\rho_1'}(B)$ unless $B$ is empty.
\end{prop}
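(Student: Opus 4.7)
The plan is to deduce Proposition~\ref{prop: upper} directly from Proposition~\ref{prop: upper general} by taking the natural embedding $F : (BV[0,1], \rho_1') \to (BV[0,1], \rho_*)$, which is the identity map on the underlying set. With this choice $F^{-1}(y) = \{y\}$ for every $y$, hence $\tilde{\mathcal J}(y) = \inf_{x \in F^{-1}(y)} I_D(x) = I_D(y)$, and the hypotheses of Proposition~\ref{prop: upper general} match up neatly with what is available here.

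Concretely, I would check the three assumptions of Proposition~\ref{prop: upper general} in turn. First, uniform continuity of $F$ on the sub-level sets of $I_D$: by~\eqref{eq: I_D bound} each sub-level set $\{h : I_D(h) \le \alpha\}$ is strongly bounded (i.e., has uniformly bounded total variation), and on such sets Lemma~\ref{lem: metrics} (together with $\rho_2' \le \rho_1'$ and control of the endpoint value via $\|h\|_\infty \le \Var(h)$) yields a modulus of continuity for $F$, exactly as done in the proof of Proposition~\ref{prop: LDP weak}. Second, tightness of $\tilde{\mathcal J} = I_D$ on $(\mathcal Y, \ell_2) = (BV[0,1], \rho_*)$: this is part of the conclusion of Theorem~\ref{thm: I_D}. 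Third, the set-theoretic condition: take $A := B$, so $B = F^{-1}(A)$ trivially, and the requirement $\cl(F^{-1}(A)) = F^{-1}(\cl A)$ translates to $\cl_{\rho_1'}(B) = \cl_{\rho_*}(B)$, which is precisely the hypothesis of Proposition~\ref{prop: upper}.

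With all assumptions verified, Proposition~\ref{prop: upper general} gives equality~\eqref{eq: tight blow up} with $\mathcal I = I_D$ and $\ell = \rho_1'$, which is exactly
\[
\inf_{h \in \cl_{\rho_1'}(B)} I_D(h) = \lim_{\varepsilon \to 0+}\inf_{h \in B^\varepsilon_{\rho_1'}} I_D(h),
\]
and the same proposition also delivers the attainment of the infimum on the left-hand side at some $h \in \cl_{\rho_1'}(B)$ (provided $B$ is non-empty, so that $\cl A$ is non-empty and $\tilde{\mathcal J}$ attains its infimum on it by tightness).

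There is no genuine obstacle here, since everything reduces to invoking earlier results; the only point deserving care is making sure that, because $F$ is the identity on the same underlying set $BV[0,1]$, the two closures appearing in Proposition~\ref{prop: upper general} really coincide with $\cl_{\rho_1'}(B)$ and $\cl_{\rho_*}(B)$ respectively, so that the assumption $\cl(F^{-1}(A)) = F^{-1}(\cl A)$ becomes exactly the hypothesis given. Once that identification is made, the result is immediate.
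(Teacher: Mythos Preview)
Your proposal is correct and follows essentially the same approach as the paper: the paper's proof is a one-line application of Proposition~\ref{prop: upper general} to the natural embedding $F:(BV[0,1],\rho_1')\to(BV[0,1],\rho_*)$, with the verification of the hypotheses deferred to the proof of Proposition~\ref{prop: LDP weak} (uniform continuity via Lemma~\ref{lem: metrics} and~\eqref{eq: I_D bound}) and to Theorem~\ref{thm: I_D} (tightness of $I_D$ in $\rho_*$). Your only slightly extraneous remark is invoking $\|h\|_\infty\le\Var(h)$ for the endpoint; in fact $|g(1)-h(1)|\le\rho_1'(g,h)$ follows directly from the definition of $\rho_1'$ since any parametrization in $\Pi'$ terminates at $(1,h(1))$.
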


We prove this applying Proposition~\ref{prop: upper general} to the natural embedding as in the proof of Proposition~\ref{prop: LDP weak}. We thus see that for the sets $B$ that are closed both in $\rho_1'$ and $\rho_*$, the upper bound  in the metric LDP of Theorem~\ref{thma: BM} matches the standard LDP one.

\begin{remark} \label{rem: closures}

A set $B$ is closed in $\rho_*$ and in $\rho_1'$ when it is a sub-level set of a functional on $BV[0,1]$ that is lower semi-continuous in both metrics. Examples of such functionals include:

\begin{enumerate}
\item The {\it action} functional $J_D$ defined by the r.h.s.\ of \eqref{eq: I_D = integral} with $I$ replaced by any {\it convex} lower semi-continuous  function $J: \R^d \to [0, +\infty]$ that satisfies $J(u) \ge c_1 |u| - c_2$ for some $c_1, c_2 >0$ and every $u \in \R^d$; for example, from \eqref{eq: TV} we see that $J_{|\cdot|}=\Var$. The action functional $J_D$ is sequentially weak-* lower semi-continuous by~\cite[Corollary 3.4.2]{Buttazzo} applied exactly as in the proof of Theorem~\ref{thm: I_D} below. By Theorem~\ref{thma: Hognas} and Remark~\ref{rem: unif bounded}, $J_D$ is also lower semi-continuous in $\rho_*$ (and hence in the stronger metric~$\rho_1'$) because its sub-level sets are strongly bounded. Indeed, from \eqref{eq: TV}  it follows that $J_D(h) \ge c_1 \Var(h) - c_2$ for $h \in BV[0,1]$ since $J_\infty(u) \ge c_1$ for $u \in \R^d$ by \eqref{eq: recession function}.

\item \label{item: maximum} The {\it maximum} functional $h \mapsto \sup_{0 \le t \le 1} (h(t) \cdot \ell)$, where $\ell \in \S^{d-1}$ is a fixed direction. To check its lower semi-continuity in $\rho_*$ (which suffices since $\widetilde{W}_* \subset M_1'$), note that the value of the functional on an $h \in BV[0,1]$ is either $h(t_0-) \cdot \ell$ or $h(t_0) \cdot \ell$ for some $t_0 \in [0,1]$. This fact, combined with the {\it c\`adl\`ag}  property of $h$ and the fact that  every $\rho_*$-convergent sequence contains a subsequence that converges pointwise on a dense subset of $[0,1]$  that contains $1$, yields the required property of the functional.

Note in passing that the maximum functional is continuous in $\rho_i$ but not in $\rho_i'$. However, the {\it positive maximum} $h \mapsto  \sup_{0 \le t \le 1} (h(t) \cdot \ell)_+$ is continuous in $\rho_i'$.

\item For $d=2$, the {\it perimeter of the convex hull} (and the {\it mean width} in higher dimensions) of  $h([0,1])$, the image of a planar curve $h$. Large deviations of the perimeter of the convex hull of the first $n $ steps of a planar random walk were studied by Akopyan and Vysotsky~\cite{AkopyanVysotsky}. For the perimeter functional, lower semi-continuity in $\rho_*$ follows from the combination of Cauchy's formula for  perimeter of a planar convex set, 
the result of Example~\ref{item: maximum}), and Fatou's lemma. 

The perimeter functional is continuous in $\rho_i$ but not in $\rho_i'$.  However, the perimeter of the convex hull of the set $h([0,1]) \cup \{0\}$ is continuous in $\rho_i'$.

One more type of sets  satisfying the assumption of Proposition~\ref{prop: upper} is as follows.

\item $B_f:=\{h \in BV[0,1]: h \le f\}$ for some $f \in C[0,1]$. This set is closed in $\rho_*$ (and $\rho_1'$) by the same argument as we used in Example~\ref{item: maximum}). If $B_f \cap \mathcal D_{I_D} \neq \varnothing$, the minimizers  of $I_D$ over $B$ exist, and we call them the {\it taut strings}. In a probabilistic setup, taut strings were considered by Lifshits and Setterqvist~\cite{LifshitsSetterqvist}, who were interested in those corresponding to the sets of the form  $\{h \in BV[0,1]: f_1 \le h \le f_2\} = B_{f_1} \cap (-B_{-f_2})$ for $f_1, f_2 \in C[0,1]$.
\end{enumerate} 
\end{remark}

\subsection{Proof of Theorem~\ref{thm: I_D}}
We will use results and methods of the calculus of variations, referring to the book by Buttazzo~\cite{Buttazzo}. The action (integral) functionals on the spaces of finite Borel vector-valued measures are considered in Chapter~3 of~\cite{Buttazzo}, where the notation $C_0([0,1]; \R^d)$ corresponds to our $C[0,1]$; see~\cite[Section~3.1]{Buttazzo}.
Consider an integral functional $I_C$ defined by $I_C(h):=\int_0^1 I(h')dt$ for $h \in AC_0[0,1]$, and extend it formally to $BV[0,1]$ by putting $I_C(h):=+\infty$ for $h \not \in AC_0[0,1]$. The main  idea, which applies in a more general setup as described in Section~1.3 of~\cite{Buttazzo}, is that a natural extension is actually given by $\cl_{\seq(W_*)} (I_C)$, referred to in \cite{Buttazzo} as {\it relaxed functional}. The book offers results which will allow us to find this extension explicitly, and we will show that it equals $I_D$.



We have $I_C=I_D$ on $AC_0[0,1]$ by~\cite[Theorem~5.3]{BorovkovMogulskii2}, hence
\begin{equation} \label{eq: I_D <= I_C}
I_D(h) \le I_C(h), \qquad h \in BV[0,1].
\end{equation}
In the new notation, the definition~\eqref{eq: I_D def} of $I_D$ reads as 
\begin{equation} \label{eq: I_D = new}
I_D(h) = \sup_{\mathbf t \subset (0,1]: \, \# \mathbf t < \infty } I_C(h^{\mathbf t}), \qquad h \in BV[0,1].
\end{equation}
Then for any dense sequence $(t_n)_{n \ge 1}$ in $(0,1]$, for $\mathbf{t}_n:=\{t_1, \ldots, t_n\}$ we have 
\begin{equation} \label{eq: I_D = limit}
I_D(h)= \lim_{n \to \infty}  I_C(h^{\mathbf{t}_n}), \qquad h \in BV[0,1].
\end{equation}
because $I_D$ is lower semi-continuous w.r.t.\ $\rho_2'$ by Theorem~\ref{thma: BM} and  $\rho_2'(h^{\mathbf{t}_n}, h) \to 0$, which follows from the c\`adl\`ag property of $h$.  Moreover, $h^{\mathbf{t}_n} \to h$ weakly-*. Indeed, Lemma~\ref{lem: metrics} and the equality  $h^{\mathbf{t}_n}(1) = h(1)$ imply that $\rho_*(h^{\mathbf{t}_n} , h) \to 0$, and the weak-* convergence then follows from Theorem~\ref{thma: Hognas}, which applies since $\Var(h^{\mathbf{t}_n}) \le \Var(h)$ by \eqref{eq: Var def}. 

Let us prove lower semi-continuity of $I_D$  in the metric $\rho_*$. Use that lower semi-continuity in metric spaces is a sequential property. Assume  that there are $g, g_1, g_2, \ldots \in BV[0,1]$ such that $\rho_*(g_n, g) \to 0$ but $I_D(g) > \liminf_n I_D(g_n)$ as $n \to \infty$. Since $\rho_*(g_n, g) \to 0$ means convergence in $L_1$ and $g_n(1) \to g(1)$, by considering a subsequence, we can assume w.l.o.g.\ that the convergence is point-wise on a subset of $(0,1]$ of full Lebesgue measure.
Pick a sequence $(s_n)_{n \ge 1}$ of distinct elements of  this set that is dense in $(0,1]$ and satisfies $s_1=1$.


For any integer $k \ge 1$, put  $\mathbf{s}_k:=\{s_1, \ldots, s_k\}$, and let $\sigma_k$ be the permutation of length $k$ such that $s_{\sigma_k(1)}< \ldots < s_{\sigma_k(k)}$. For any $i, k, n \in \N$ satisfying $1 \le i  \le k$, we have  $g_n^{\mathbf{s}_k}(s_i)=g_n(s_i)$ and also $g_n(s_{\sigma_k(i)}) \to g(s_{\sigma_k(i)})$ as $n \to \infty$. Hence, by lower semi-continuity and non-negativity of $I$, for any fixed integer $k \ge 1$ we have
\begin{align*}
I_C(g^{\mathbf{s}_k}) &=s_{\sigma_k(1)} I\Big( \frac{g(s_{\sigma_k(1)})}{s_{\sigma_k(1)}} \Big) + \sum_{i=1}^{k -1} (s_{\sigma_k(i+1)}-s_{\sigma_k(i)}) I\Big( \frac{g(s_{\sigma_k(i+1)})-g(s_{\sigma_k(i)})}{s_{\sigma_k(i+1)}-s_{\sigma_k(i)}}\Big) \\
&\le \liminf_{n \to \infty} \bigg [ s_{\sigma_k(1)} I\Big( \frac{g_n(s_{\sigma_k(1)})}{s_{\sigma_k(1)}} \Big)  + \sum_{i=1}^{k -1} (s_{\sigma_k(i+1)}-s_{\sigma_k(i)}) I\Big( \frac{g_n(s_{\sigma_k(i+1)})-g_n(s_{\sigma_k(i)})}{s_{\sigma_k(i+1)}-s_{\sigma_k(i)}}\Big) \bigg ] \\
&= \liminf_{n \to \infty} I_C(g_n^{\mathbf{s}_k}).
\end{align*}
From \eqref{eq: I_D = new} we see that $I_C(g_n^{\mathbf{s}_k}) \le  I_D(g_n) $, hence $I_C(g^{\mathbf{s}_k}) \le \liminf_{n \to \infty} I_D(g_n)$. It remains to take $k \to \infty$ and use \eqref{eq: I_D = limit}  to arrive at $I_D(g) \le \liminf_{n \to \infty}  I_D(g_n)$, which contradicts our assumption that the lower semi-continuity does not hold.

Furthermore, the sub-level sets of $I_D$ are strongly bounded by \eqref{eq: I_D bound}. They are closed in the metric $\rho_*$ since $I_D$ is lower semi-continuous in $\rho_*$. Therefore, they are compact by Corollary~\ref{cor: Hognas}, and thus  $I_D$ is tight, as claimed. 

It remains to prove integral representation \eqref{eq: I_D = integral}. Denote by $\mathcal I(h)$ its r.h.s. By~\cite[Corollary 3.4.2]{Buttazzo} (which applies  because the change of measure formula in the definition of directional total variation $\sigma^h$ brings to $\mathcal I$ into the form~\cite[Eq.~(3.4.1)]{Buttazzo}  and Condition (i) in \cite[Lemma~2.2.3]{Buttazzo} is satisfied for $z_0=\E X_1$), the functional $\mathcal I$ is sequentially weakly-* lower semi-continuous on $BV[0,1]$. Equivalently, $\mathcal I$ is lower semi-continuous in $\seq(W_*)$, the topology  where a set is closed if and only if it is sequentially weakly-* closed (\cite[Proposition~1.1.5(ii)]{Buttazzo}). 

We claim that  $\mathcal I = \cl_{\seq(W_*)} (I_C)$. It holds $\mathcal I \le \cl_{\seq(W_*)} (I_C) $ because the on r.h.s.\ we have the maximal functional that is lower semi-continuous in $\seq(W_*)$ and dominated by $I_C$ (\cite[Propositions~1.1.2(ii)]{Buttazzo}). Therefore, since $\mathcal I = I_C$ on $AC_0[0,1]$, we have $\cl_{\seq(W_*)} (I_C) = I_C$ on $AC_0[0,1]$.
On the other hand, by \cite[Theorem~3.3.1]{Buttazzo}, 
$\cl_{\seq(W_*)} (I_C)$  on $AC_0[0,1]$ is of the form $h \mapsto \int_0^1 J(t, h'(t)) dt$ for some  measurable function $J:[0,1] \times \R^d \to [0, +\infty]$ such that $J(t, \cdot)$ is convex and lower semi-continuous for a.e.\ $t$. This is possible only when $J(t, \cdot) =I$ for a.e.~$t$ by  \cite[Proposition~2.1.3]{Buttazzo} (applied with $\psi=0$), hence $\mathcal I = \cl_{\seq(W_*)} (I_C)$, as claimed.

Furthermore, $I_D$ is lower semi-continuous in $\seq(W_*)$  because $I_D$ is lower semi-continuous in the metric $\rho_*$ and the topology  $\widetilde{W}_*$ generated by $\rho_*$ is coarser than $\seq(W_*)$ (see Section~\ref{sec: weak-* and rho_*}). On the other hand, by  lower semi-continuity of $\mathcal I$ in $\seq(W_*)$ and equality \eqref{eq: I_D = limit}, where $I_C(h^{\mathbf{t}_n})= \mathcal I (h^{\mathbf{t}_n})$ and $h^{\mathbf{t}_n} \to  h$ weakly-* as $n \to \infty$ (this is explained right after~\eqref{eq: I_D = limit}), we have  $\mathcal I \le I_D$. We also have $I_D \le I_C$ by \eqref{eq: I_D <= I_C}. Therefore, since $\mathcal I = \cl_{\seq(W_*)} (I_C) $ and $\cl_{\seq(W_*)} (I_C) $ is the maximal functional that is lower semi-continuous in $\seq(W_*)$ and dominated by $I_C$, we have $\mathcal I = I_D$, as required.

\section{Cram\'er's theorem for kernel-weighted sums} \label{Sec: weighted}
In this section we present an application of Proposition~\ref{prop: LDP weak}, yielding an LDP for {\it kernel-weighted sums} of i.i.d.\ random vectors in $\R^d$. This extends the classical Cram\'er theorem. 

It appears that large deviations of weighted i.i.d.\ random variables were first studied by Brook~\cite{Brook}. The next results were due to Kiesel and Stadtm{\"u}ller~\cite{KieselStadtmuller2000}, who considered only the ``light-tailed'' case where the i.i.d.\ terms have finite Laplace transform, i.e.\ $\mathcal D_{\mathcal L}=\R$. They proved an LDP and found the rate function (available in \cite[Theorem on p.~933]{KieselStadtmuller2000} with the coefficients $a_\nu$ for the kernel weights given in~\cite[p.~976]{KieselStadtmuller1996}); below we will present a more explicit expression~\eqref{eq: weight rate f 1-D} for the rate function. A  recent result~\cite[Theorem~3]{Gantert+} by Gantert et al.\ is an LDP (with  polynomial speed and an explicit rate function) for kernel-weighted sums of i.i.d.\ random variables with stretched (super-) exponential tails. In this ``heavy-tailed'' case where the random variables have no exponential moments, the rate function is defined by the supremum of the (non-negative) kernel. This corresponds to the first  two terms in our formula~\eqref{eq: weight rate f 1-D} (cf.~Remark~\ref{rem: weighted}.\ref{item: minimizer}). Since these terms vanish for ``light-tailed'' increments, in the case $0 \in \intr\mathcal D_{\mathcal L} $ with $\mathcal D_{\mathcal L} \neq \R$ there is a natural transition of the rate function from the ``light-tailed'' case to the ``heavy-tailed'' one. 

We introduce more notation to state our result. Let us agree to write $\sup g$, $\max g$, etc.\ for the supremum, maximum, etc.\ of a real-valued function $g$ over its (effective) domain. For any real-valued increasing function $g$ on an interval $(a,b)$, where $-\infty \le a <  b \le +\infty$, denote by $\bar g$ its extension to $\R$ given by $\bar g(x) := g(b-)$ for $x \ge b$ and $\bar g (x) := g(a+)$ for $x \le a$. For a real $x$, put $x_+:=\max(x,0)$ and $x_-:=(-x)_+$, and use the same notation for functions. By convention, put $\frac{1}{0}:=+\infty$ and $\frac{C}{0}:=\R^d$ for any set $C \subset \R^d$ satisfying $0 \in \intr C $. Recall that $K(u)=\log \mathcal L$ is the cumulant generating function of $X_1$, finite on its effective domain $\mathcal D_{\mathcal L}$. 

\begin{theorem} \label{thm: weighted}
Assume that $X_1$ is a random vector in $\R^d$ such that $0 \in \intr \mathcal D_{\mathcal L}$ and $f:[0,1]\to \R$ is a non-zero Lipschitz function. Then the sequence of random vectors $\big(\frac1n \sum_{k=1}^n f(\frac{k}{n}) X_k\big)_{n \ge 1}$ satisfies the LDP in $\R^d$ with the tight rate function $I_f$ that is the Legendre--Fenchel transform of the convex function 
\begin{equation} \label{E_f definition}
E_f(\lambda):=\int_0^1 K(\lambda f(t))dt, \quad \lambda \in \R^d.
\end{equation}

Moreover, if  $\dim(\supp (X_1))=d$,  the rate function satisfies
\begin{equation} \label{eq: weight rate f explicit}
I_f(x)=
\int_0^1 I(\nabla K( (\nabla E_f)^{-1}(x) f(t))) dt , \qquad x \in \nabla E_f(\intr D_f),
\end{equation}
where $D_f:=  \frac{\mathcal D_{\mathcal L} }{\max f_+} \cap \frac{-\mathcal D_{\mathcal L} }{\max f_- } $ and $\nabla E_f$ is an injective function (on its domain $\intr D_f$). For $d=1$, equality~\eqref{eq: weight rate f explicit} extends to 
\begin{equation} \label{eq: weight rate f 1-D}
I_f(x)= M_+{(x-\sup E'_f)}_+ + M_- {(x-\inf E'_f)}_-  + \int_0^1 I\big(K' \big(\overline{(E_f')^{-1}}(x) f(t)\big)\big) dt , \quad x \in \R,
\end{equation}
where $M_\pm:= \min \big (\frac{I_\infty(1)}{\max f_\pm}, \frac{I_\infty(-1)}{\max f_\mp} \big)$ and $K'(\pm I_\infty(\pm 1)):=K'(\pm I_\infty(\pm 1)\mp)$, with the symbol $\mp$ standing for the left/right limit. 
\end{theorem}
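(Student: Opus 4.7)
The plan is to derive the LDP for $T_n := \frac{1}{n}\sum_{k=1}^n f(k/n)\, X_k$ by representing it as a continuous linear functional of the step-function trajectory $S_{[n\,\cdot\,]}/n$, invoking the LDP in $(BV[0,1], \rho_*)$ from Proposition~\ref{prop: LDP weak}, then identifying the resulting variational rate function with the Legendre--Fenchel transform of $E_f$, and finally extracting the explicit formulas \eqref{eq: weight rate f explicit} and \eqref{eq: weight rate f 1-D}.

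First I would observe that $T_n = F(S_{[n\,\cdot\,]}/n)$ where $F:BV[0,1]\to\R^d$ is the linear map $F(h) := \int_0^1 f\,dh$, since $d(S_{[n\,\cdot\,]}/n)$ is the $\R^d$-valued measure $\sum_{k=1}^n (X_k/n)\,\delta_{k/n}$. Integrating by parts for Lipschitz $f$ yields the representation $F(h) = f(1)\,h(1) - \int_0^1 f'(t)\,h(t)\,dt$, independent of $h(0)$, whence $|F(h_1) - F(h_2)| \le \max(|f(1)|, \|f'\|_\infty)\,\rho_*(h_1,h_2)$. Thus $F$ is Lipschitz from $(BV[0,1], \rho_*)$ to $\R^d$, and the contraction principle applied to Proposition~\ref{prop: LDP weak} yields the LDP for $(T_n)$ with the tight rate function $I_f(x) := \inf\{I_D(h) : h \in BV[0,1],\ F(h) = x\}$.

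Next I would identify $I_f$ with $E_f^*$. Convexity of $E_f$ and its finiteness near $0$ follow from those of $K$. For $I_f \ge E_f^*$, I would apply the pointwise Fenchel inequality $I(v) \ge u\cdot v - K(u)$ with $u = \lambda f(t)$, $v = h'(t)$ to the absolutely continuous part of $h$, and use the bound $I_\infty(\dot{h_s}(t)) \ge \lambda f(t)\cdot \dot{h_s}(t)$ (valid since $\lambda f(t)\in \mathcal D_{\mathcal L}$ for $\lambda \in \intr D_f$) on the singular part via representation~\eqref{eq: I_D = integral}. Summing the two contributions and using $F(h) = x$ gives $I_D(h) \ge \lambda\cdot x - E_f(\lambda)$, hence $I_f \ge E_f^*$ after taking the supremum over $\lambda$. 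For the matching upper bound in the regime of~\eqref{eq: weight rate f explicit}, the assumption $\dim(\supp X_1) = d$ ensures strict convexity of $K$ on $\intr \mathcal D_{\mathcal L}$ and injectivity of $\nabla E_f$ on $\intr D_f$; to each $x = \nabla E_f(\lambda)$ I assign the absolutely continuous trajectory $h_\lambda(t) := \int_0^t \nabla K(\lambda f(s))\,ds$. Then $F(h_\lambda) = x$ and the Legendre duality identity $I(\nabla K(u)) = u\cdot \nabla K(u) - K(u)$ gives $I_D(h_\lambda) = \lambda\cdot x - E_f(\lambda) = E_f^*(x)$, proving~\eqref{eq: weight rate f explicit}.

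For~\eqref{eq: weight rate f 1-D} in dimension one, I would extend the construction to $x$ outside $E_f'(\intr D_f)$ by perturbing the limiting absolutely continuous trajectory with a singular jump. For $x > \sup E_f'$, let $\lambda^\star$ denote the right endpoint of $D_f$ and take a candidate of the form $h = h_{\lambda^\star} + J\cdot \I_{[t_0, 1]}$ with $f(t_0)\,J = x - \sup E_f'$; by \eqref{eq: I_D 1-D} the absolutely continuous part contributes $\int_0^1 I(K'(\lambda^\star f(t)))\,dt$ while the singular jump contributes $|J|\, I_\infty(\sgn J)$. Minimizing $I_\infty(\sgn J)/|f(t_0)|$ over $t_0$ and the sign of $J$ produces the coefficient $M_+$, with the symmetric case $x < \inf E_f'$ yielding $M_-$. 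The matching lower bound is obtained by the same Fenchel inequality, taking $\lambda$ to approach $\pm\lambda^\star$ and using that $\lambda f(t)$ then tends to $\pm I_\infty(\pm 1)$ precisely at points achieving the extrema of $f_\pm$. The main obstacle will be justifying the clean split into an absolutely continuous contribution pinned at the boundary level $\sup E_f'$ or $\inf E_f'$ and a singular contribution priced by $M_\pm$, together with careful handling of the one-sided derivatives $K'(\pm I_\infty(\pm 1)\mp)$ at the non-differentiable endpoints of $\mathcal D_{\mathcal L}$, where uniqueness and attainment of minimizers in the variational problem are most delicate.
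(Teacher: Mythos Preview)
Your overall route is the same as the paper's: represent $T_n$ as $\int_0^1 f\,d(S_{[n\cdot]}/n)$, integrate by parts to get $\rho_*$-continuity, contract Proposition~\ref{prop: LDP weak}, and identify the variational rate function with $E_f^*$ via duality. The explicit formula~\eqref{eq: weight rate f explicit} is also derived the same way, via the Legendre identity $I(\nabla K(u)) = u\cdot\nabla K(u)-K(u)$.

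There is, however, a genuine gap in your identification $I_f = E_f^*$. You prove $I_f \ge E_f^*$ by Fenchel's inequality, but for the reverse inequality you only construct minimizers $h_\lambda$ for $x \in \nabla E_f(\intr D_f)$, and only under the extra hypothesis $\dim(\supp X_1)=d$. The theorem asserts $I_f = E_f^*$ on all of $\R^d$ and with no support assumption. Bridging this via convex analysis (``both sides are convex lsc and agree on a large set'') would require knowing that $\nabla E_f(\intr D_f)\supset \intr\mathcal D_{E_f^*}$, i.e.\ essential smoothness of $E_f$, which is not automatic (the cumulant generating function $K$ need not be steep). The paper avoids this entirely by computing $I_f^*$ directly: interchange $\sup_x$ and $\sup_h$ to get $I_f^*(\lambda)=\sup_{h\in BV}\big(\int_0^1 \lambda f\cdot dh - I_D(h)\big)$, then split $h=h_a+h_s$ via the integral representation~\eqref{eq: I_D = integral} to obtain $I_f^* = E_f$, and finally invoke the biconjugate theorem (using that $I_f$, as a tight rate function obtained by contracting the convex $I_D$ through a linear map, is convex and lsc).

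For~\eqref{eq: weight rate f 1-D}, once $I_f=E_f^*$ is established the paper simply evaluates $E_f^*$ directly: for $x>\sup E_f'$ and $M_+<\infty$ one has $I_f(x)=M_+x-E_f(M_+)$, which is then rewritten algebraically using $I(K'(u))=uK'(u)-K(u)$. Your constructive approach (absolutely continuous part at the boundary level plus an optimally placed jump) leads to the same answer and is essentially what the paper does in Remark~\ref{rem: weighted}.\ref{item: minimizer} to describe the minimizers, but as a proof of~\eqref{eq: weight rate f 1-D} it is the harder road: you would still need the matching lower bound, and your own ``main obstacle'' paragraph shows you see the difficulty.
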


\begin{remark} \label{rem: weighted}
Let us make some comments. 

\begin{enumerate}
\item \label{item: dim < d} Equality~\eqref{eq: weight rate f explicit} remains valid when $\dim(\supp (X_1))<d$, in which case the integrand on the r.h.s.\ still is a well-defined function, calculated by taking any element of the set $ (\nabla E_f)^{-1}(x) $. We will prove this together with the main case where  $\supp (X_1)$ has full dimension.

\item \label{item: simplification} If $\mathcal D_{\mathcal L} = \R^d$ and $\dim(\supp(X_1))=d$, then $\intr(\mathcal D_{I_f})= \nabla E_f(\intr D_f)$, hence in this case equality \eqref{eq: weight rate f explicit}  completely defines $I_f$ by lower semi-continuity; and if we additionally assume that $\conv( \supp(X_1)) = \R^d$, then $\nabla E_f(\intr D_f) = \R^d$ and if $d=1$, the first two terms in \eqref{eq: weight rate f 1-D} vanish. We will prove the claims of Items~\ref{item: simplification}) and~\ref{item: minimizer})  below  after proving Theorem~\ref{thm: weighted}.

\item \label{item: minimizer} Assume that $\dim(\supp(X_1))=d$.  Then for any $x \in \nabla E_f(\intr D_f)$, the function $h^{(x)}(t):=\int_0^t\nabla K( (\nabla E_f)^{-1}(x) f(s))ds$ is the unique minimizer of $I_D$ on $\{h \in BV[0,1]: \int_0^1 f dh = x\}$. If we additionally assume that $d=1$, $M_+<\infty$, $f \ge 0$ for simplicity, and $\argmax_{t \in [0,1]} f(t)$ has Lebesgue measure zero to fully distinguish from the usual case $f \equiv 1$ of equal weights, then for (say) any $x \ge \sup E_f'$, all the minimizers of $I_D$ are of the form
\[
h(t)= \frac{x - \sup E_f'}{\max f_+}g(t) + \int_0^t K'( M_+ f(s))ds, \qquad t \in [0,1],
\]
where $g \in BV[0,1]$ is a non-decreasing function such that $g(0)=0$, $g(1)=1$, and $dg$ is supported on $\argmax_t f(t)$. For example, if $\argmax_t f(t)$ has a unique element $t_0$, the only possible $g$ is $\I_{[t_0,1]}$. The singular part of such $h$ provides the first term on the r.h.s.\ of~\eqref{eq: weight rate f 1-D}.

Also note that in the case when $\mathcal D_{\mathcal L} = \R^d$, the function $h^{(x)}$ is the unique solution to the {\it Euler--Lagrange equation} for the Lagrangian $L(t,p):=I(p) +f(t) \lambda \cdot p$, where $t \in [0,1]$ and $p \in \R^d$, but the corresponding results of classical variational calculus (see, e.g., Cesari~\cite[Sections~2.2 and 2.7]{Cesari}) require the additional assumptions $f \in C^1[0,1]$ and $\mathcal D_I=\R^d$; the latter one is equivalent to $\conv(\supp (X_1))=\R^d$.

\item Let us give a concrete example. Assume that $X_1$ is a non-generate Gaussian vector in $\R^d$. We have $K(v)=\frac12 v^\top \Sigma v + \mu \cdot v$ and $I(v)=\frac12 (v-\mu)^\top \Sigma^{-1} (v-\mu)$ for $v \in \R^d$, where $\mu:= \E X_1$ and $\Sigma:=\E(X_1 X_1^\top) - \mu \mu^\top$. Then $\nabla K(v) = \Sigma v + \mu$ and $\nabla E_f(\lambda)= m_1 \mu + m_2 \Sigma \lambda$ for $\lambda \in \R^d$, where $m_i:= \int_0^1 f^i(s) ds$ for $i \in \{1,2\}$, hence $h^{(x)}(t) = t \mu  + m_2^{-1} (x-m_1 \mu) F(t)$ for $x \in \R^d$, where $F(t):=\int_0^t f(s)ds$. This gives $I_f(x)=\frac12 m_2^{-1} (x - m_1 \mu)^\top \Sigma^{-1} (x - m_1 \mu) $.
\end{enumerate}
\end{remark}

\begin{proof}[{\bf Proof of Theorem~\ref{thm: weighted}}]
We use the representation 
$$
\frac1n \sum_{k=1}^n f\Big(\frac{k}{n} \Big) X_k= \int_0^1 f d \Big(\frac1n S_{[n \cdot]} \Big)$$ 
for the kernel-weighted sums. Any Lipschitz function is absolutely continuous, and the integration by parts formula (\cite[Theorem~3.36]{Folland}) yields $\int_0^1 f d h = f(1) h(1) - \int_0^1 f' h dt$ for $h \in BV[0,1]$. Hence the functional $h \mapsto \int_0^1 f d h $ on $BV[0,1]$ is continuous in the metric $\rho_*$ since $f'$ is bounded and $L^\infty[0,1]$ is dual to $L^1[0,1]$. Therefore, by the usual contraction principle (\cite[Theorem~4.2.1]{DemboZeitouni}) it follows from Proposition~\ref{prop: LDP weak} that the sequence of kernel-weighted sums satisfies the LDP  in $\R^d$ with the tight rate function  
\begin{equation} \label{eq: weight rate f 2}
I_f(x)=\inf \limits_{h \in BV([0,1]; \, \R^d): \, \int_0^1 f d h = x} I_D(h), \qquad x \in \R^d,
\end{equation} 
where the infimum is always attained at some $h$. Let us {\it compute} this function. 

Denote by $\chi_C$ the {\it convex-analytic} characteristic function of a set $C \subset \R^d$, defined to be $0$ on the set and $+\infty$ on its complement. By definition~\eqref{eq: Legendre-Fenchel} of the Legendre--Fenchel transform, which we denote by $^*$, for any $\lambda \in \R^d$,
\begin{align*}
I_f^*(\lambda) &= \sup_{x \in \R^d} \big( \lambda \cdot x - I_f(x) \big) = \sup_{x \in \R^d} \bigg( \lambda \cdot x - \inf_{h \in BV} \bigg[ I_D(h) + \chi_{\{x\}} \bigg( \int_0^1 f dh \bigg) \bigg] \bigg) \\
&=\sup_{x \in \R^d} \sup_{h \in BV} \bigg( \lambda \cdot x  - I_D(h) - \chi_{\{x\}} \bigg( \int_0^1 f dh \bigg) \bigg) \\
&=\sup_{h \in BV} \bigg(  \int_0^1 \lambda f \cdot dh - I_D(h) \bigg), 
\end{align*}
where the last equality follows after interchanging the suprema. Thus, $I_f^*(\lambda)=I_D^*(\lambda f)$, where $I_D^*$ is the Legendre--Fenchel transform of the function $I_D$ on $BV$, defined by the standard duality (given by the respective integral) between the spaces  $BV[0,1]$  and $C[0,1]$. 

By representation \eqref{eq: I_D = integral}, we have
\[
I_f^*(\lambda) = \sup_{h \in AC_0} \bigg( \int_0^1 \lambda f \cdot h' dt - \int_0^1 I(h')dt \bigg) + \sup_{h \in BV: \, h_a=0}  \bigg( \int_0^1 \lambda f \cdot d h - \int_{\S^{d-1}} I_\infty(\ell) \sigma^{h}(d \ell)\bigg).
\]
To find the first supremum, we use Proposition IX.2.1 in the book by Ekeland and T\'emam~\cite{EkelandTemam}, which computes the Legendre--Fenchel transform of the functional $g \mapsto \int_0^1 I(g(t))dt$ on $L^1([0,1]; \R^d)$, defined by the standard duality (given by the respective integral) between the spaces $L^1[0,1]$  and $L^\infty[0,1]$. This result applies, in the notation and terminology of~\cite{EkelandTemam}, with $\alpha =1$, $u_0\equiv\E X_1$, and the integrand $f=I$, which is non-negative and normal (i.e., lower semi-continuous) on $B=\R^d$. This gives, by $I^*=K$,
\begin{equation} \label{eq: I_f^* computation}
I_f^*(\lambda) = \int_0^1 K(\lambda f(t))dt  + \sup_{h \in BV: \, h_a=0}  \bigg( \int_0^1 \lambda f \cdot d h - \int_{\S^{d-1}} I_\infty(\ell) \sigma^{h}(d \ell)\bigg),
\end{equation}
where the first term is $E_f(\lambda)$.

Consider the second term in \eqref{eq: I_f^* computation}. For $h \in BV[0,1]$, put $h^\pm(t):=d h([0,t]\cap \{\pm f > 0\})$ for $t \in [0,1]$ and $h^=:=h-h^+-h^-$. We have $h^\pm, h^=\in BV[0,1]$. It follows from \eqref{eq: TV measure} that $V^h=V^{h^+} + V^{h^-} +V^{h^=}$, hence $\sigma^h=\sigma^{h^+} + \sigma^{h^-} +\sigma^{h^=}$ by the definition of directional total variation. Therefore,
\begin{align} \label{eq: f dh split}
\int_0^1 \lambda f \cdot d h - \int_{\S^{d-1}} I_\infty(\ell) \sigma^{h}(d \ell) &= \sum_{\varsigma \in \{+,-,=\}} \bigg[ \int_0^1 \lambda f \cdot \dot { h^\varsigma} \, d V^{h^\varsigma} - \int_{\S^{d-1}} I_\infty(\ell) \sigma^{h^\varsigma}(d \ell)\bigg] \notag \\
&\le \sum_{\varsigma \in \{+,-\}} \bigg[  \int_0^1  {(\varsigma \max f_\varsigma \cdot  \lambda \cdot \dot { h^\varsigma})}_+ \, d V^{h^\varsigma} - \int_{\S^{d-1}} I_\infty(\ell) \sigma^{h^\varsigma}(d \ell) \bigg]  \notag \\
&= \sum_{\varsigma \in \{+,-\}} \bigg[ \int_{\S^{d-1}} \big ( {(\varsigma \max f_\varsigma  \cdot \lambda \cdot \ell)}_+ - I_\infty(\ell)\big )\sigma^{h^\varsigma}(d \ell) \bigg], 
\end{align}
where the inequality follows from $0 \le \pm f \le \max f_\pm$ $(d V^{h^\pm})$-a.e. We estimate the integrands using that  $I_\infty(\ell)=\sup_{u \in \mathcal{D}_{\mathcal L} }   u \cdot \ell = \sup_{u \in \cl \mathcal{D}_{\mathcal L} }   u \cdot \ell$, and recall that $D_f =  \frac{\mathcal D_{\mathcal L} }{\max f_+} \cap \frac{-\mathcal D_{\mathcal L} }{\max f_- } $. Then
\begin{equation} \label{eq: singular term}
\int_0^1 \lambda f \cdot d h - \int_{\S^{d-1}} I_\infty(\ell) \sigma^{h}(d \ell) \le \chi_{\cl \mathcal D_{\mathcal L} } \big(\max f_+  \lambda \big) + \chi_{\cl \mathcal D_{\mathcal L} } \big(-\max f_-  \lambda \big) = \chi_{\cl D_f}(\lambda).
\end{equation}

By \eqref{eq: I_f^* computation} and \eqref{eq: singular term}, we have $ I_f^*(\lambda) = E_f(\lambda)$ for $\lambda \in \cl D_f $. On the other hand, $  I_f^*(\lambda) \ge E_f(\lambda) =+\infty$ for $\lambda \not \in \cl D_f $ because for such $\lambda$, $K(\lambda f(t) ) = +\infty$ for $t$ in a non-empty interval since $f$ is continuous on $[0,1]$ and $\mathcal D_K=\mathcal D_{\mathcal L}$. All together, we get $ I_f^*  = E_f $.

The function $E_f$ is convex on $\R^d$ as a mixture of convex functions $\lambda \mapsto K(\lambda f(t))$. Since $K$ is lower semi-continuous by Fatou's lemma, so is $E_f$, again by Fatou's lemma. Therefore, $I_f^*= E_f$ yields the required identity $I_f=E_f^*$ by \cite[Theorem 12.2 and Corollary 12.1.1]{Rockafellar}. 

Next we prove formula~\eqref{eq: weight rate f explicit} for $I_f$. By Theorem~26.4 in~\cite{Rockafellar}, which applies because $E_f$ is a continuous convex function differentiable on $\intr D_f$ (since so is $K$ on $\intr \mathcal D_{\mathcal L}$), we have
$$
I_f(x) = x \cdot (\nabla E_f)^{-1}(x) - E_f((\nabla E_f)^{-1}(x)), \quad x \in \nabla E_f(\intr D_f).
$$
The result used also states that the r.h.s.\ is well-defined even if $(\nabla E_f)^{-1}(x) $ contains more than one element, in which case we shall understand the r.h.s.\ replacing $(\nabla E_f)^{-1}(x) $ by any $\lambda \in (\nabla E_f)^{-1}(x) $, and the resulting value does not depend on the particular choice of~$\lambda$. This justifies Remark~\ref{rem: weighted}.\ref{item: dim < d}.

For any $x \in \nabla E_f(\intr D_f)$ and $\lambda \in (\nabla E_f)^{-1}(x) $, we have 
\[
x   =\nabla E_f(\lambda)  = \nabla \bigg( \int_0^1 K(\lambda f(t) ) dt \bigg)  =  \int_0^1  \nabla K(\lambda f(t))  f(t)  dt,
\]
because the cumulant generating function $K$ is smooth on $\intr \mathcal D_{\mathcal L}$ and $\lambda f(t) \in \intr \mathcal D_{\mathcal L}$ for every $t \in [0,1]$ by $\lambda \in (\nabla E_f)^{-1}(x) \subset \intr D_f$. Then 
\begin{align*}
I_f(x) &=x \cdot  \lambda - \int_0^1 K(\lambda f(t) )  dt  \\
&= \int_0^1 \big( \nabla K(\lambda f(t)) \cdot \lambda f(t) -  K(\lambda f(t) ) \big) dt = \int_0^1 I(\nabla K(\lambda f(t))) dt,
\end{align*}
where in the last equality  we  applied~\cite[Theorem~26.4]{Rockafellar} again. This proves~\eqref{eq: weight rate f explicit}.

Furthermore, $\dim(\supp (X_1))<d$ if and only if $X_1$ is supported on a hyperplane, in which case $K$, and hence $E_f$, is constant along the lines orthogonal to the hyperplane. Therefore $\nabla E_f$ cannot be injective in this case. On the contrary, if  $\dim(\supp (X_1))=d$, then it follows by a standard application of the Cauchy--Schwartz inequality  that the Hessian  of  $K$ is positive-definite on the interior of its effective domain. Then the same holds for $E_f$, which is a mixture of  functions $\lambda \mapsto K(\lambda f(t))$ which are positively definite  when $f(t)\neq 0$. This implies that $\nabla E_f$ is injective, as claimed, since the scalar function $t \mapsto \nabla E_f ((1-t) \lambda_1 + t \lambda_2) \cdot (\lambda_2 - \lambda_1) $ for $t \in [0,1]$ has strictly positive derivative  whenever $\lambda_1, \lambda_2 \in \R^d$ are distinct. 

It remains to prove equality \eqref{eq: weight rate f 1-D}. Here $d=1$ and $\intr D_f=(-M_-, M_+)$. For $x \in E_f'(\intr D_f)$, \eqref{eq: weight rate f 1-D} reduces to equality~\eqref{eq: weight rate f explicit} and there is nothing to prove. Since $K$ is continuously differentiable and convex on $\intr \mathcal D_{\mathcal L}$, so is $E_f$ on $\intr D_f$. Therefore $E_f'(\intr D_f)=(\inf E_f', \sup E_f')$. Assume that the complement of this set is non-empty and consider an $x$ from there. W.l.o.g., we can assume that $\sup E_f' <\infty$ and prove \eqref{eq: weight rate f 1-D} only on $[\sup E_f', \infty)$. Also, assume that $X_1$ is not constant, otherwise the claim is trivial.

We can check that equality \eqref{eq: weight rate f 1-D} holds true for $x =\sup E_f'$  by taking $x \nearrow \sup E_f' $ in \eqref{eq: weight rate f 1-D} and using lower semi-continuity of $I$ combined with  the facts that $I_f$ increases on $[m_1 \mu, +\infty) $ and $\sup E_f'> E_f'(0)=m_1 \mu$, where $m_1=\int_0^1 f(t)dt$ and $\mu = \E X_1$. Therefore it remains to prove \eqref{eq: weight rate f 1-D} for $x >\sup E_f'$. We have $I_f(x)= E_f^*(x)=\sup_{\lambda \in \R^d} (x \cdot \lambda - E_f(\lambda))$. If $M_+ = \infty$, this gives $I_f(x)=\infty$ for  $x >\sup E_f'$, matching~\eqref{eq: weight rate f 1-D}. If $M_+<\infty$, this gives, by taking into account that $E_f(\lambda) =+\infty$ for $\lambda > M_+$, that $I_f(x)=M_+x   - E_f(M_+)$ for $x >\sup E_f'$. By the equality $I(K'(u))= u K'(u)-K(u)$ for $u \in \cl (-M_-, M_+)$, we get
\begin{align*}
I_f(x)&=M_+ x - M_+ \int_0^1 f(t) K'(M_+ f(t)) dt + \int_0^1 \big ( M_+ f(t) \cdot K'(M_+ f(t)) - K(M_+ f(t)) \big) dt\\
&= M_+(x - E_f'(M_+-)) + \int_0^1 I(K'(M_+ f(t))dt,
\end{align*}
which coincides with  \eqref{eq: weight rate f 1-D}  for $x >\sup E_f'$ since $\sup E_f'= E_f'(M_+-)$ and $\overline{(E_f')^{-1}}(x) = M_+$.
\end{proof}

\begin{proof}[{\bf Proof of Remark~\ref{rem: weighted}}]
\ref{item: simplification}) If  $\dim(\supp(X_1))=d$, then a standard application of H\"older's inequality implies strict convexity of $K$ on $\mathcal D_{\mathcal L}$.
So is $E_f$ on its effective domain $\intr D_f$. If we additionally require that $\mathcal D_{\mathcal L} = \R^d$, then  $D_f = \R^d$ and hence $\intr(\mathcal D_{I_f})= \nabla E_f(\intr D_f)$ by Theorem~26.5 in \cite{Rockafellar}. If we further assume that  $\conv(\supp(X_1))=\R^d$, then $\conv(\supp(u \cdot X_1))=\R$ for any non-zero $u \in \R^d$. This readily implies $\lim_{t \to \infty} K(tu)/t =+\infty$. Therefore Theorem~26.6 from \cite{Rockafellar} applies, ensuring that $\nabla E_f$ is a homeomorphism from $\R^d$ to $\R^d$.

\ref{item: minimizer}) Fix any $x \in \R^d$ and $h \in BV[0,1]$ such that  $x=\int_0^1 f dh$. Put $x_a:= \int_0^1 f d h_a$ and $x_s:=\int_0^1 f dh_s $. For any $\lambda \in \R^d$, by Fenchel's inequality we have 
\begin{equation} \label{eq: Fenchel}
I(h'(t)) + K(\lambda f(t) ) \ge \lambda f(t) \cdot h'(t), \quad \text{a.e. } t \in [0,1],
\end{equation} 
hence $I_D(h_a) \ge \lambda x_a - E_f(\lambda)$. Combining this with inequality \eqref{eq: singular term} and optimizing over $\lambda \in \cl D_f$ yields
\begin{equation} \label{eq: I_D(h) = I_f*}
I_D(h) \ge \sup_{\lambda \in \cl  D_f} \big( \lambda x - E_f(\lambda) \big) = I_f(x),
\end{equation}
where the equality follows from  the facts that $I_f=E_f^*$ and  $\mathcal D_{E_f} \subset  \cl  D_f$. 

If $x \in \nabla E_f(\intr D_f)$, the supremum in \eqref{eq: I_D(h) = I_f*} is attained at $\lambda = (\nabla E_f)^{-1}(x) $; recall that $\nabla E_f$ is injective due to the assumption $\dim(\supp (X_1))=d$. Therefore, $I_D(h)=I_f(x)$ implies that \eqref{eq: singular term}  is an equality and \eqref{eq: Fenchel} is an a.e.\ equality for this particular $\lambda$. Since $\lambda \in \intr D_f$, we have $\lambda f(t) \in \intr \mathcal D_K$ for every $t \in [0,1]$, hence Fenchel's inequality~\eqref{eq: Fenchel} is an equality if and only if $h'(t)=\nabla K(\lambda f(t))$ for a.e.\ $t$ (\cite[Theorem~23.5]{Rockafellar}). Also, since  $\max f_\pm \cdot  \lambda \in \intr \mathcal D_{\mathcal L}$, inequality \eqref{eq: singular term} is strict when $h_s \neq 0$ because the integrands in the last line of~\eqref{eq: f dh split} are strictly negative. All together, this means that  $h(t)=\int_0^t \nabla K ((\nabla E_f)^{-1}(x) f(s)) ds$ is the unique minimizer of $I_D$, i.e.\ $h=h^{(x)}$, as claimed.

If $d=1$, $\sup E_f'< \infty$, and $M_+<\infty$, the implication $(I_D(h)=I_f(x)) \Rightarrow (h=h^{(x)})$ extends to $x = \sup E_f'$ (corresponding to $\lambda = M_+$) by continuity as in the corresponding argument in the proof of Theorem~\ref{thm: weighted}. For $\lambda = M_+$ and when $\argmax_t f(t)$  is a singular set,  Fenchel's inequality~\eqref{eq: Fenchel} still is an equality if and only if $h'(t)=\nabla K(M_+ f(t))$ for a.e.\ $t$ because $M_+ f(t) \in \intr \mathcal D_K$ for a.e.\ $t$. 
However, for $\lambda = M_+$ and when $f \ge 0$, the last line of \eqref{eq: f dh split} is zero, and thus \eqref{eq: f dh split} is an equality if and only if $dh_s$ is a non-negative finite measure supported on $\argmax_t f(t)$. Thus, for $x > \sup E_f'$, the equality $I_D(h)=I_f(x)$ implies that $h_a=h^{(\sup E_f')}$ and $h_s(1)=\frac{x - \sup E_f'}{\max f_+}$, as claimed.
\end{proof}

\section*{Acknowledgements} I am grateful to Anatoly Mogulskii for extended explanations of his works on large deviations and for his comments on the current paper. I also thank 
Mikhail Lifshits for discussions on large deviations, G\"unter Last for explanations on his local Steiner-type formula, and Chang-Han Rhee for the comments on~\cite{Bazhba+}. I am indebted to the anonymous referee for very useful suggestions to improve presentation of the paper.

\bibliographystyle{plain}
\bibliography{ldp}

\end{document}